\documentclass[11pt,a4paper,subeqn]{amsart}

\usepackage[english]{babel}
\usepackage[T1]{fontenc}
\usepackage[utf8]{inputenc}
\usepackage{amsmath}
\usepackage{amssymb}
\usepackage{amsthm}
\usepackage{latexsym}
\usepackage{amsfonts}

\usepackage[onehalfspacing]{setspace} 
\onehalfspacing 
\setdisplayskipstretch{.421} 
\newlength{\abovebis} 
\setlength{\abovebis}{\abovedisplayskip} 
\newlength{\belowbis} 
\setlength{\belowbis}{\belowdisplayskip} 
\newlength{\aboveshortbis} 
\setlength{\aboveshortbis}{\abovedisplayshortskip} 
\newlength{\belowshortbis} 
\setlength{\belowshortbis}{\belowdisplayshortskip} 
\everydisplay\expandafter{%
  \the\everydisplay 
  \advance\abovedisplayskip\abovebis 
  \advance\belowdisplayskip\belowbis 
  \advance\abovedisplayshortskip\aboveshortbis 
  \advance\belowdisplayshortskip\belowshortbis 
} 

\allowdisplaybreaks[3]

\def\R{\mathbb{R}}

\def\N{\mathbb{N}}
\def\C{\mathbb{C}}
\def\matn{M_{n}(\C)}

\def\Ree{\mathrm{Re}}
\def\Imm{\mathrm{Im}}

\theoremstyle{plain}

\newtheorem{lem}{Lemma}[section]
\newtheorem{theo}[lem]{Theorem}
\newtheorem{prop}[lem]{Proposition}

\newtheorem{cor}[lem]{Corollary}

\theoremstyle{definition}

\newtheorem{rem}{Remark}

\numberwithin{equation}{section}

\begin{document}
\title[Global uniqueness and reconstruction in 2D]{Global uniqueness and reconstruction for the multi-channel Gel'fand-Calder\'on inverse problem in two dimensions}
\author{Roman G. Novikov}
\author{Matteo Santacesaria}
\address[R. G. Novikov and M. Santacesaria]{CNRS (UMR 7641), Centre de Mathématiques Appliquées, \'Ecole Polytechnique, 91128, Palaiseau, France}
\email{novikov@cmap.polytechnique.fr, santacesaria@cmap.polytechnique.fr}
\begin{abstract}
We study the multi-channel Gel'fand-Calder\'on inverse problem in two dimensions, i.e. the inverse boundary value problem for the equation $-\Delta \psi + v(x) \psi = 0$, $x\in D$, where $v$ is a smooth matrix-valued potential defined on a bounded planar domain $D$. We give an exact global reconstruction method for finding $v$ from the associated Dirichlet-to-Neumann operator. This also yields a global uniqueness results: if two smooth matrix-valued potentials defined on a bounded planar domain have the same Dirichlet-to-Neumann operator then they coincide.
\end{abstract}

\maketitle

\section{Introduction}

Let $D$ be an open bounded domain in $\R^2$ with with $C^2$ boundary and let $v \in C^1(\bar D, M_{n}(\C))$, where $\matn$ is the set of the $n \times n$ complex-valued matrices. The Dirichlet-to-Neumann map associated to $v$ is the operator $\Phi : C^1(\partial D, \matn) \to L^p(\partial D, \matn), \; p < \infty$ defined by:
\begin{equation}
\Phi(f) = \left.\frac{\partial \psi}{\partial \nu}\right|_{\partial D}
\end{equation}
where $f \in C^1(\partial D, \matn)$, $\nu$ is the outer normal of $\partial D$ and $\psi$ is the $H^1 (\bar D, \matn)$-solution of the Dirichlet problem
\begin{equation} \label{equa}
-\Delta \psi + v(x) \psi =0 \; \; \textrm{on } D, \; \; \psi|_{\partial D} = f;
\end{equation}
here we assume that 
\begin{equation} \label{direig}
0 \textrm{ is not a Dirichlet eigenvalue for the operator } - \Delta + v \textrm{ in } D.
\end{equation}
Equation \eqref{equa} arises, in particular, in quantum mechanics, acoustics, electrodynamics; formally, it looks like the Schr\"odinger equation with potential $v$ at zero energy.

In addition, \eqref{equa} comes up as a 2D-approximation for the 3D equation (see section \ref{sec3d}).
\smallskip

The following inverse boundary value problem arises from this construction.
\smallskip

{\bf Problem 1.} Given $\Phi$, find $v$.\smallskip

This problem can be considered as the Gel’fand inverse boundary value problem for the multi-channel 2D Schr\"odinger equation at zero energy (see \cite{G}, \cite{N1}) and can also be seen as a generalization of the 2D Calder\'on problem for the electrical impedance tomography (see \cite{C}, \cite{N1}). In addition, the history of inverse problems for the two-dimensional Schr\"odinger equation at fixed energy goes back to \cite{DKN} (see also \cite{N3}, \cite{Gr} and references therein).
Note also that Problem 1 can be considered as a model problem for the monochromatic ocean tomography (e.g. see \cite{Bur} for similar problems arising in this tomography).

In the case of complex-valued potentials the global injectivity of the map $v \to \Phi$ was firstly proved in \cite{N1} for $D \subset \R^d$ with $d \geq 3$ and in \cite{B} for $d = 2$ with $v \in L^p$: in particular, these results were obtained by the use of global reconstructions developed in the same papers.

This is the first paper which gives global (uniqueness and reconstruction) results for Problem 1 with $\matn$-valued potentials with $n \ge 2$. Results in this direction were only known for potentials with many restrictions (e.g. see \cite{X}).

We emphasize that Problem 1 is not overdetermined, in the sense that we consider the reconstruction of a $\matn$-valued function $v(x)$ of two variables, $x \in D \subset \R^2$, from a $\matn$-valued function $\Phi(\theta,\theta')$ of two variables, $(\theta, \theta') \in \partial D \times \partial D$, where $\Phi(\theta,\theta')$ is the Schwartz kernel of the Dirichlet-to-Neumann operator $\Phi$: this is one of the principal differences between Problem 1 and its analogue for $D \subset \R^d$ with $d \ge 3$. At present, very few global results are proved for non-overdetermined inverse problems for the Schr\"odinger equation on $D \subset \R^d$ with $d \geq 2$. Concerning these results, our paper develops the two-dimensional works \cite{B}, \cite{NS} and indicates 3D applications of the method. The non-overdetermined inverse problems, including multi-channel ones, are much more developed for the Schr\"odinger equation in dimension $d =1$ (e.g. see \cite{AM}, \cite{ZS}).

We recall that in global results one does not assume that the potential $v$ is small in some sense or is (piecewise) real analytic or is subject to some other serious restrictions.
\smallskip

Our global reconstruction procedure for Problem 1 follows the same scheme as in the scalar case given in \cite{N1}, with some fundamental modifications inspired by \cite{B}.

Let us identify $\R^2$ with $\C$ and use the coordinates $z= x_1 + i x_2, \; \bar z = x_1 - i x_2$, where $(x_1, x_2) \in \R^2$. We define a special family of solutions of equation \eqref{equa}, which we call the Buckhgeim analogues of the Faddeev solutions: $\psi_{z_0}(z,\lambda)$, for $z, z_0 \in \bar D, \lambda \in \C$, such that $-\Delta \psi + v (x) \psi =0$ over $D$, where in particular $\psi_{z_0}(z,\lambda) \to e^{\lambda (z-z_0)^2}I$ for $\lambda \to \infty$ (i.e. for $|\lambda| \to + \infty$) and $I$ is the identity matrix.

More precisely, for a matrix valued potential $v$ of size $n$, we define $\psi_{z_0}(z,\lambda)$ as
\begin{equation}
\psi_{z_0}(z,\lambda) = e^{\lambda (z-z_0)^2}\mu_{z_0}(z,\lambda),
\end{equation}
where $\mu_{z_0}(\cdot,\lambda)$ solves the integral equation
\begin{equation} \label{defmu}
\mu_{z_0} (z,\lambda) = I+ \int_D g_{z_0}(z,\zeta,\lambda) v(\zeta) \mu_{z_0}(\zeta,\lambda) d \Ree \zeta \, d \Imm \zeta,
\end{equation}
$I$ is the identity matrix of size $n \in \N$, $z,z_0 \in \bar D, \; \lambda \in \C$ and
\begin{equation} \label{defg}
g_{z_0}(z,\zeta, \lambda) = \frac{e^{\lambda(\zeta-z_0)^2-\bar \lambda(\bar \zeta - \bar z_0)^2}}{4 \pi^2} \int_D \frac{e^{-\lambda(\eta -z_0)^2+\bar \lambda(\bar \eta -\bar z_0)^2}}{(z- \eta)(\bar \eta -\bar \zeta)} d\Ree \eta \, d \Imm \eta
\end{equation}
is a Green function of the operator $4\left( \frac{\partial}{\partial z} +2\lambda (z-z_0) \right) \frac{\partial}{\partial \bar z}$ in $D$, for $z_0 \in D$.
We consider equation \eqref{defmu}, at fixed $z_0$ and $\lambda$, as a linear integral equation for $\mu_{z_0}( \cdot, \lambda) \in C^1_{\bar z} (\bar D)$: we will see that it is uniquely solvable for $|\lambda| > \rho_1(D,N_1,n)$, where $\|v\|_{C^1_{\bar z}(\bar D, \matn)} < N_1$ (see Proposition \ref{prop3}).

In order to state the reconstruction method we also define the Bukhgeim analogue of the Faddeev generalized scattering amplitude
\begin{equation} \label{defh}
h_{z_0} (\lambda) = \int_D e^{\lambda (z-z_0)^2 - \bar \lambda (\bar z - \bar z_0)^2} v(z) \mu_{z_0} (z,\lambda) d \mathrm{Re}z \, d \mathrm{Im} z,
\end{equation}
for $z_0 \in \bar D, \; \lambda \in \C$.

\begin{theo} \label{theo2}
Let $D \subset \R^2$ be an open bounded domain with $C^2$ boundary and let $v \in C^1 (\bar D, \matn)$ be a matrix-valued potential which satisfies \eqref{direig} and $v|_{\partial D} = 0$. Consider, for $z_0 \in D$, the functions $h_{z_0}$, $\psi_{z_0}$, $g_{z_0}$ defined above and $\Phi, \Phi_0$ the Dirichlet-to-Neumann maps associated to the potentials $v$ and $0$, respectively. Then the following reconstruction formulas and equation hold:
\begin{align} \label{recv}
v(z_0) &= \lim_{\lambda \to \infty} \frac{2 }{\pi}|\lambda| h_{z_0}(\lambda), \\ \label{rech}
h_{z_0} (\lambda) &= \int_{\partial D} e^{-\bar \lambda (\bar z - \bar z_0)} (\Phi - \Phi_0) \psi_{z_0} (z,\lambda) |dz|,\\ \label{bordpsi}
\psi_{z_0}(z,\lambda)|_{\partial D}&= e^{\lambda(z-z_0)^2}I + \int_{\partial D}G_{z_0}(z,\zeta,\lambda)(\Phi-\Phi_0) \psi_{z_0}(\zeta,\lambda) |d \zeta|,
\end{align}
where
\begin{equation} \label{defG}
G_{z_0}(z,\zeta, \lambda) = e^{\lambda (z-z_0)^2}g_{z_0} (z,\zeta,\lambda)e^{-\lambda (\zeta-z_0)^2},
\end{equation}
$z_0 \in D$, $z, \zeta \in \partial D$, $\lambda \in \C$, $|\lambda| > \rho_1(D,N_1,n)$,  where $\|v\|_{C^1_{\bar z}(\bar D, \matn)} < N_1$.

In addition, if $v \in C^2(\bar D, \matn)$ with $\|v\|_{C^2(\bar D, \matn)} < N_2$ and $\frac{\partial v}{\partial \nu}|_{\partial D} = v|_{\partial D}= 0$ then the following estimates hold:
\begin{subequations} \label{estimv}
\begin{align} \label{esttv}
\left|v(z_0) - \frac{2}{\pi}|\lambda| h_{z_0}(\lambda)\right| &\leq a(D,n) \frac{\log(3|\lambda|)}{|\lambda|^{1/2}}N_2(N_2+1), \\
\label{esttv2}
\left|v(z_0) - \frac{2}{\pi}|\lambda| h_{z_0}(\lambda)\right| &\leq b(D,n) \frac{(\log(3|\lambda|))^2}{|\lambda|^{3/4}}N_2(N_2^2+1),
\end{align}
\end{subequations}
for $|\lambda| > \rho_2(D,N_1,n)$, $z_0 \in D$.
\end{theo}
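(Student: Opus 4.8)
The plan is to obtain the three identities \eqref{recv}, \eqref{rech}, \eqref{bordpsi} from the integral equation \eqref{defmu} by exploiting two structural facts, and then to get the estimates \eqref{estimv} by making the proof of \eqref{recv} quantitative. The first fact is that $\psi_{z_0}(\cdot,\lambda)$ solves $-\Delta\psi+v\psi=0$ in $D$: applying $4(\partial_z+2\lambda(z-z_0))\partial_{\bar z}$ to \eqref{defmu} and using that $g_{z_0}$ is a Green function of this operator gives $4(\partial_z+2\lambda(z-z_0))\partial_{\bar z}\mu_{z_0}=v\mu_{z_0}$; since $e^{\lambda(z-z_0)^2}$ is holomorphic in $z$, conjugation by it replaces $\partial_z+2\lambda(z-z_0)$ by $\partial_z$, hence $4(\partial_z+2\lambda(z-z_0))\partial_{\bar z}$ by $\Delta$, so that $\Delta\psi_{z_0}=v\psi_{z_0}$. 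The second fact is that $G_{z_0}$ in \eqref{defG} is a Green function of $\Delta$ in each of its first two variables, $\Delta_zG_{z_0}(z,\zeta,\lambda)=\Delta_\zeta G_{z_0}(z,\zeta,\lambda)=\delta(z-\zeta)$: the first equality is the same conjugation applied to the defining property of $g_{z_0}$, and the second follows by a direct computation from \eqref{defg} based on $\partial_{\bar w}(1/w)=\pi\delta$. It is convenient to rewrite \eqref{defmu} as $\psi_{z_0}(z,\lambda)=e^{\lambda(z-z_0)^2}I+\int_DG_{z_0}(z,\zeta,\lambda)v(\zeta)\psi_{z_0}(\zeta,\lambda)\,d\Ree\zeta\,d\Imm\zeta$ for $z\in\bar D$, and to observe that $e^{\lambda(z-z_0)^2-\bar\lambda(\bar z-\bar z_0)^2}\mu_{z_0}=e^{-\bar\lambda(\bar z-\bar z_0)^2}\psi_{z_0}$, so that \eqref{defh} reads $h_{z_0}(\lambda)=\int_De^{-\bar\lambda(\bar z-\bar z_0)^2}v(z)\psi_{z_0}(z,\lambda)\,d\Ree z\,d\Imm z$. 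Throughout, the factors $e^{\lambda(z-z_0)^2}$, $e^{-\bar\lambda(\bar z-\bar z_0)^2}$ and $G_{z_0}$ are scalar, so Green's identities apply entrywise to the matrix-valued functions.

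For \eqref{rech}, I would replace $v\psi_{z_0}$ by $\Delta\psi_{z_0}$ in the last integral, note that $e^{-\bar\lambda(\bar z-\bar z_0)^2}$ is antiholomorphic and hence harmonic, and apply Green's second identity on $D$; the term with $\Delta(e^{-\bar\lambda(\bar z-\bar z_0)^2})$ drops and $h_{z_0}(\lambda)=\int_{\partial D}\bigl(e^{-\bar\lambda(\bar z-\bar z_0)^2}\partial_\nu\psi_{z_0}-\psi_{z_0}\,\partial_\nu e^{-\bar\lambda(\bar z-\bar z_0)^2}\bigr)|dz|$. Introducing the harmonic $\psi_{z_0}^{(0)}$ with $\psi_{z_0}^{(0)}|_{\partial D}=\psi_{z_0}(\cdot,\lambda)|_{\partial D}$ (so $\partial_\nu\psi_{z_0}|_{\partial D}=\Phi(\psi_{z_0}|_{\partial D})$ and $\partial_\nu\psi_{z_0}^{(0)}|_{\partial D}=\Phi_0(\psi_{z_0}|_{\partial D})$) and applying Green's identity again to the harmonic pair $(e^{-\bar\lambda(\bar z-\bar z_0)^2},\psi_{z_0}^{(0)})$, the two $\psi\,\partial_\nu(\cdot)$ terms coincide and cancel, leaving $h_{z_0}(\lambda)=\int_{\partial D}e^{-\bar\lambda(\bar z-\bar z_0)^2}(\Phi-\Phi_0)\psi_{z_0}(z,\lambda)|dz|$, i.e. \eqref{rech}. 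For \eqref{bordpsi}, I would take the rewritten \eqref{defmu} with $v\psi_{z_0}=\Delta\psi_{z_0}$ and apply Green's second identity in $\zeta$, for $z\in D$, to the pairs $(G_{z_0}(z,\cdot,\lambda),\psi_{z_0})$ and $(G_{z_0}(z,\cdot,\lambda),\psi_{z_0}^{(0)})$; the reproducing property $\Delta_\zeta G_{z_0}=\delta$ turns the volume terms into $\psi_{z_0}(z)$ and $\psi_{z_0}^{(0)}(z)$, and subtracting (so that the singular $\psi\,\partial_{\nu_\zeta}G_{z_0}$ terms cancel, the boundary values agreeing) one obtains, after using \eqref{defmu} once more, $\psi_{z_0}^{(0)}(z)-e^{\lambda(z-z_0)^2}I=\int_{\partial D}G_{z_0}(z,\zeta,\lambda)(\Phi-\Phi_0)\psi_{z_0}(\zeta,\lambda)|d\zeta|$; letting $z\to\partial D$ (the single-layer-type potential on the right is continuous up to $\partial D$) and using $\psi_{z_0}^{(0)}|_{\partial D}=\psi_{z_0}|_{\partial D}$ gives \eqref{bordpsi}. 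All this is legitimate because $v\in C^1$, \eqref{direig} holds, and elliptic regularity makes $\psi_{z_0}$ smooth up to $\partial D$.

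To prove \eqref{recv}, I would split $h_{z_0}(\lambda)=\int_De^{\lambda(z-z_0)^2-\bar\lambda(\bar z-\bar z_0)^2}v(z)\,d\Ree z\,d\Imm z+\int_De^{\lambda(z-z_0)^2-\bar\lambda(\bar z-\bar z_0)^2}v(z)\bigl(\mu_{z_0}(z,\lambda)-I\bigr)\,d\Ree z\,d\Imm z$. In the second term, the Neumann series for \eqref{defmu} (convergent for $|\lambda|>\rho_1$ by Proposition \ref{prop3}, with operator norm $\to0$ as $|\lambda|\to\infty$) gives $\|\mu_{z_0}(\cdot,\lambda)-I\|\to0$, while the oscillatory integral is $O(1/|\lambda|)$ (the phase $\lambda(z-z_0)^2-\bar\lambda(\bar z-\bar z_0)^2$ is purely imaginary with unique critical point $z_0$, and $v|_{\partial D}=0$ kills the boundary contribution), so this term is $o(1/|\lambda|)$. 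In the first term, writing $\lambda=|\lambda|e^{i\alpha}$ and substituting $z-z_0=e^{-i\alpha/2}w$ turns the phase into $|\lambda|(w^2-\bar w^2)=4i|\lambda|\Ree w\,\Imm w$, and stationary phase (with $v\in C^1$, $v|_{\partial D}=0$) gives $\tfrac{\pi}{2|\lambda|}v(z_0)+o(1/|\lambda|)$, since $\int_{\R^2}e^{4i|\lambda|x_1x_2}\,dx_1dx_2=\tfrac{\pi}{2|\lambda|}$. Hence $\tfrac2\pi|\lambda|h_{z_0}(\lambda)\to v(z_0)$, which is \eqref{recv}.

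The estimates \eqref{estimv} come from the same decomposition made quantitative under $v\in C^2$, $v|_{\partial D}=\partial_\nu v|_{\partial D}=0$: a quantitative stationary-phase expansion of the first term (the extra regularity and the vanishing of $v$ and $\partial_\nu v$ on $\partial D$ pushing down the interior and boundary remainders) together with a quantitative bound $\|\mu_{z_0}(\cdot,\lambda)-I\|=O(\|T_\lambda\|)$ coming from the kernel estimates for $g_{z_0}$ underlying Proposition \ref{prop3} give the first estimate, the $|\lambda|^{-1/2}$ rate reflecting these decays and the $\log(3|\lambda|)$ the logarithmic growth in the $C^1_{\bar z}$-bounds for $g_{z_0}$ typical of the two-dimensional setting; the second estimate \eqref{esttv2} is obtained similarly by keeping one more term of the Neumann expansion of $\mu_{z_0}$ inside $h_{z_0}$ and estimating the resulting oscillatory integrals more carefully, improving the rate to $|\lambda|^{-3/4}$ at the cost of a further $\log$-factor. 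I expect the main obstacle to be exactly these quantitative bounds — controlling $\int_De^{\lambda(z-z_0)^2-\bar\lambda(\bar z-\bar z_0)^2}w(z)\,d\Ree z\,d\Imm z$ and $\|\mu_{z_0}(\cdot,\lambda)-I\|$ with the sharp powers of $|\lambda|$ and correct $\log$-factors, and handling the boundary terms carefully — whereas \eqref{recv}, \eqref{rech}, \eqref{bordpsi} are essentially consequences of Green's formula once $\Delta_zG_{z_0}=\Delta_\zeta G_{z_0}=\delta$ and the PDE for $\psi_{z_0}$ are in hand.
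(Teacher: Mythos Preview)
Your approach matches the paper's. The identities \eqref{rech} and \eqref{bordpsi} are derived there from a matrix Alessandrini identity $\int_{\partial D}u_0(\Phi-\Phi_0)u\,|dz|=\int_D u_0\,v\,u$ (for $\Delta u_0=0$, $(-\Delta+v)u=0$), applied with $u_0=e^{-\bar\lambda(\bar z-\bar z_0)^2}I$ and $u_0=G_{z_0}(z,\cdot,\lambda)I$ respectively --- which is exactly your Green's-formula argument packaged, using $\Delta_\zeta G_{z_0}=\delta$ just as you do. For \eqref{recv} and \eqref{esttv} the paper uses your split $h_{z_0}=h_{z_0}^{(0)}+(h_{z_0}-h_{z_0}^{(0)})$, handling the first term by stationary phase (Lemma~\ref{lem2}) and the second via Lemma~\ref{lem3} combined with $\|\mu_{z_0}-I\|_{C^1_{\bar z}}=O(|\lambda|^{-1/2})$ from Lemma~\ref{lem1}; note that the oscillatory-integral bound must be applied to $w=v(\mu_{z_0}-I)$ in the $C^1_{\bar z}$ norm, not just the sup norm, which is why the Neumann-series estimate in $C^1_{\bar z}$ matters.

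Your anticipated obstacle is exactly where the work lies. For \eqref{esttv2} the paper proves, inside the argument, a new estimate (Lemma~\ref{lem11}) $\|g_{z_0,\lambda}u\|_{C(\bar D)}\le\eta(D)\,|\lambda|^{-3/4}\log(3|\lambda|)\,\|u\|_{C^1_{\bar z}}$, obtained by writing $g_{z_0,\lambda}=\tfrac14 T\bar T_{z_0,\lambda}$ and splitting the outer Cauchy integral into a ball $B_{z_0,\delta}$ with $\delta\sim|\lambda|^{-1/4}$ and its complement. This improved $C$-norm bound is then fed into a direct integration-by-parts estimate of the first Neumann correction $\int_D e^{\lambda(z-z_0)^2-\bar\lambda(\bar z-\bar z_0)^2}v\,g_{z_0,\lambda}v$, with a further near/far split at radius $|\lambda|^{-1/2}$, producing the $|\lambda|^{-3/4}$ rate and the extra $\log$. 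Your sketch (``keep one more Neumann term and estimate more carefully'') is correct in outline, but this $C$-norm estimate for $g_{z_0,\lambda}$ is the specific technical ingredient you would need to supply.
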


\begin{rem}
Note that in Theorem \ref{theo2}, $\rho_j = \rho_j(D,N_1,n)$, $j=1,2$ (where $\|v\|_{C^1_{\bar z}(\bar D, \matn)} < N_1$), are arbitrary fixed positive constants such that
\begin{equation}
\begin{split}
2n\frac{c_2(D)}{ |\lambda|^{\frac 1 2}}\|v\|_{C^1_ {\overline z}(\bar D)} < 1,\ |\lambda|\ge 1, \textrm{ if} \; |\lambda| > \rho_1, \\
2n\frac{c_2(D)}{ |\lambda|^{\frac 1 2}}\|v\|_{C^1_ {\overline z}(\bar D)} \le \frac 1 2 ,\ |\lambda|\ge 1, \textrm{ if} \; |\lambda| > \rho_2,
\end{split}
\end{equation}
where $c_2$ is the constant in Lemma \ref{lem1}.
\end{rem}

\begin{rem}
Note that estimate \eqref{esttv2} is not strictly stronger than \eqref{esttv} because of the presence of the $N_2^3$ factor.
\end{rem}

In order to make use of the reconstruction given by Theorem \ref{theo2}, the following two propositions are necessary:

\begin{prop}\label{prop2}
Under the assumptions of Theorem \ref{theo2} (without the additional assumptions used for \eqref{estimv}), equation \eqref{bordpsi} is a Fredholm linear integral equation of the second kind for $\psi_{z_0} \in C(\partial D)$.
\end{prop}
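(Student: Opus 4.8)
The plan is to recast \eqref{bordpsi} as a fixed–point equation on $C(\partial D,\matn)$ and to prove that the integral operator appearing in it is compact. Concretely, write \eqref{bordpsi} as $(I-A_{z_0,\lambda})\psi_{z_0}(\cdot,\lambda)|_{\partial D}=\varphi_{z_0,\lambda}$, where $\varphi_{z_0,\lambda}(z)=e^{\lambda(z-z_0)^2}I\in C(\partial D,\matn)$ and
\[
(A_{z_0,\lambda}\phi)(z)=\int_{\partial D}G_{z_0}(z,\zeta,\lambda)\,\bigl((\Phi-\Phi_0)\phi\bigr)(\zeta)\,|d\zeta|,\qquad z\in\partial D .
\]
Once $A_{z_0,\lambda}$ is shown to be a compact operator on $C(\partial D,\matn)$, the Riesz–Schauder theory makes $I-A_{z_0,\lambda}$ Fredholm of index zero, so \eqref{bordpsi} is by definition a Fredholm linear integral equation of the second kind. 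I would factor $A_{z_0,\lambda}=T_G\circ(\Phi-\Phi_0)$, with $(T_G\phi)(z)=\int_{\partial D}G_{z_0}(z,\zeta,\lambda)\phi(\zeta)|d\zeta|$, and deduce compactness of the composition from boundedness of $T_G$ together with compactness of $\Phi-\Phi_0$ on $C(\partial D,\matn)$.

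For the factor $T_G$: for fixed $z_0\in D$ and $\lambda$ the exponential prefactors in \eqref{defg}--\eqref{defG} are bounded on $\bar D\times\bar D$, so $G_{z_0}(\cdot,\cdot,\lambda)$ restricted to $\partial D\times\partial D$ is controlled by the double Cauchy–type integral in \eqref{defg}; by Lemma \ref{lem1} this integral is absolutely convergent, bounded away from $z=\zeta$, and has at most a logarithmic — hence integrable — singularity on the diagonal. Consequently $\sup_{z\in\partial D}\int_{\partial D}|G_{z_0}(z,\zeta,\lambda)|\,|d\zeta|<\infty$ and $z\mapsto G_{z_0}(z,\cdot,\lambda)$ is continuous from $\partial D$ into $L^1(\partial D)$, so $T_G$ is a bounded (indeed compact, but boundedness is all we need) operator on $C(\partial D,\matn)$.

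For the factor $\Phi-\Phi_0$: here the assumption $v|_{\partial D}=0$ is essential. Using the standard representation, for $f\in C^1(\partial D,\matn)$ one has $(\Phi-\Phi_0)f=\partial_\nu w|_{\partial D}$, where $w\in H^1_0(D,\matn)$ solves $-\Delta w+vw=-v\,E_0f$ in $D$ and $E_0f$ is the entrywise harmonic extension of $f$; existence and uniqueness of $w$ use \eqref{direig}. Since $v\in C^1(\bar D)$ with $v|_{\partial D}=0$ and $E_0f\in C(\bar D,\matn)$ with $\|E_0f\|_{C^0(\bar D)}\le\|f\|_{C^0(\partial D)}$, the right-hand side $-v\,E_0f$ lies in $C(\bar D,\matn)\subset L^p(D,\matn)$ for every $p<\infty$ with $\|v\,E_0f\|_{L^p}\le c(D,p)\|v\|_{C^0}\|f\|_{C^0(\partial D)}$; elliptic regularity up to the $C^2$ boundary then gives $w\in W^{2,p}(D,\matn)\hookrightarrow C^{1,\alpha}(\bar D,\matn)$ for all $\alpha\in(0,1)$, with $\|w\|_{C^{1,\alpha}}\le c\|f\|_{C^0(\partial D)}$, whence $\|(\Phi-\Phi_0)f\|_{C^\alpha(\partial D,\matn)}\le c\|f\|_{C^0(\partial D,\matn)}$. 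By density of $C^1$ in $C^0$ this extends $\Phi-\Phi_0$ to a bounded map $C(\partial D,\matn)\to C^\alpha(\partial D,\matn)$; composing with the compact embedding $C^\alpha(\partial D,\matn)\hookrightarrow C(\partial D,\matn)$ shows $\Phi-\Phi_0$ is compact on $C(\partial D,\matn)$. Therefore $A_{z_0,\lambda}=T_G\circ(\Phi-\Phi_0)$ is compact, as the composition of a bounded and a compact operator, and the proof is complete.

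I expect the third step to be the main obstacle: one must genuinely use $v|_{\partial D}=0$ to recover (almost) a full derivative in the trace $\partial_\nu w|_{\partial D}$. Without this hypothesis $\Phi-\Phi_0$ is only a pseudodifferential operator of order $0$ on $\partial D$ and need not be bounded, let alone compact, on $C(\partial D)$, so the factorization argument — and indeed the whole reconstruction scheme — would break down. The remaining technical point, making the weak-singularity bound for $G_{z_0}$ on $\partial D\times\partial D$ precise enough to conclude boundedness of $T_G$ on $C(\partial D,\matn)$, is routine and is exactly what Lemma \ref{lem1} supplies.
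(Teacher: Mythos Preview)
Your overall strategy --- factor $A_{z_0,\lambda}=T_G\circ(\Phi-\Phi_0)$ and show the composition is compact on $C(\partial D,\matn)$ --- is exactly the paper's. The paper, however, obtains compactness in a more symmetric way: it invokes Lemma~\ref{lem5} for the logarithmic bound on $G_{z_0}$ (not Lemma~\ref{lem1}, which concerns the volume operator $g_{z_0,\lambda}$, so your citation is misplaced) and Lemma~\ref{lem6} for the logarithmic bound on the Schwartz kernel of $\Phi-\Phi_0$; both factors are then weakly singular integral operators on $\partial D$, hence each is already compact on $C(\partial D)$, and so is their composition. Your route --- elliptic $W^{2,p}$ regularity for $w$ to get $\Phi-\Phi_0:C\to C^\alpha$ bounded, then compose with $C^\alpha\hookrightarrow C$ --- is a legitimate alternative to Lemma~\ref{lem6}, a bit heavier but self-contained.

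The genuine misconception in your write-up is the claimed necessity of $v|_{\partial D}=0$. Your own argument does not actually use it: for $f\in C(\partial D)$ the harmonic extension $E_0f$ is in $C(\bar D)$ with $\|E_0f\|_{C}\le\|f\|_{C}$ regardless of $v$, so $-v\,E_0f\in L^p(D)$ for every $p<\infty$ and the $W^{2,p}$ bound for $w$ follows; nowhere does $v|_{\partial D}=0$ enter. This is consistent with Remark~\ref{rem3}, which states explicitly that the hypothesis $v|_{\partial D}=0$ is \emph{unnecessary} for equation~\eqref{bordpsi} and for Propositions~\ref{prop2} and~\ref{prop3}. Your assertion that without this hypothesis $\Phi-\Phi_0$ ``is only a pseudodifferential operator of order $0$ \ldots\ and need not be bounded, let alone compact, on $C(\partial D)$'' is incorrect: the principal symbols of $\Phi$ and $\Phi_0$ coincide (they depend only on the boundary geometry), and Lemma~\ref{lem6} in fact gives a logarithmic kernel bound --- hence compactness on $C(\partial D)$ --- without any boundary condition on $v$. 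So drop that paragraph; the rest of your argument, with Lemma~\ref{lem5} in place of Lemma~\ref{lem1}, is fine.
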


\begin{prop} \label{prop3}
Under the assumptions of Theorem \ref{theo2} (without the additional assumptions used for \eqref{estimv}), for $|\lambda| > \rho_1(D,N_1,n)$, where $\|v\|_{C^1_{\bar z}(\bar D, \matn)} < N_1$, equations \eqref{defmu} and \eqref{bordpsi} are uniquely solvable in the spaces of continuous functions on $\bar D$ and $\partial D$, respectively.
\end{prop}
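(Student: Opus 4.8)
The plan is to dispose of the two integral equations separately: \eqref{defmu} by a contraction argument built on Lemma \ref{lem1}, and then \eqref{bordpsi} by combining the Fredholm property from Proposition \ref{prop2} with a reduction of its homogeneous version back to \eqref{defmu}. For \eqref{defmu}, fix $z_0\in D$ and $\lambda$ with $|\lambda|>\rho_1(D,N_1,n)$, and consider the linear operator
\[
(A_\lambda\phi)(z)=\int_D g_{z_0}(z,\zeta,\lambda)\,v(\zeta)\,\phi(\zeta)\,d\Ree\zeta\,d\Imm\zeta,\qquad \phi\in C(\bar D,\matn).
\]
Lemma \ref{lem1} controls $g_{z_0}$, and together with submultiplicativity of the matrix norm this gives $\|A_\lambda\|_{C(\bar D,\matn)\to C(\bar D,\matn)}\le 2n\,c_2(D)\,|\lambda|^{-1/2}\,\|v\|_{C^1_{\bar z}(\bar D,\matn)}$, which is $<1$ precisely because $|\lambda|>\rho_1$ (this being the defining property of $\rho_1$; cf. the Remark after Theorem \ref{theo2}). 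Hence $I-A_\lambda$ is boundedly invertible on $C(\bar D,\matn)$ through its Neumann series, so \eqref{defmu} has a unique solution $\mu_{z_0}(\cdot,\lambda)\in C(\bar D,\matn)$, and the homogeneous equation $\mu=A_\lambda\mu$ has only $\mu\equiv0$. (Using $v\in C^1_{\bar z}$ and the smoothing of $A_\lambda$, one can check a posteriori that this solution in fact lies in $C^1_{\bar z}(\bar D,\matn)$.)

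For \eqref{bordpsi}, Proposition \ref{prop2} says it is a Fredholm equation of the second kind in $C(\partial D,\matn)$, so by the Fredholm alternative it is uniquely solvable as soon as its homogeneous version, $\phi(z)=\int_{\partial D}G_{z_0}(z,\zeta,\lambda)(\Phi-\Phi_0)\phi(\zeta)\,|d\zeta|$ on $\partial D$, has only the solution $\phi\equiv0$. Let $\phi$ be such a solution (as regular as the smoothing operator $\Phi-\Phi_0$ allows, by the mapping properties underlying Proposition \ref{prop2}), let $\psi$ be the solution of $-\Delta\psi+v\psi=0$ in $D$ with $\psi|_{\partial D}=\phi$ (it exists and is unique by \eqref{direig}), and set $\mu=e^{-\lambda(z-z_0)^2}\psi$, so that $L_{z_0,\lambda}\mu=v\mu$ in $D$, where $L_{z_0,\lambda}=4\bigl(\frac{\partial}{\partial z}+2\lambda(z-z_0)\bigr)\frac{\partial}{\partial\bar z}$. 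The Green-type computation that yields formula \eqref{bordpsi} in Theorem \ref{theo2} uses only that $-\Delta\psi+v\psi=0$ in $D$ and the definitions of $\Phi,\Phi_0,g_{z_0},G_{z_0}$; applied to this $\psi$ it gives, for $z\in\partial D$,
\[
e^{\lambda(z-z_0)^2}\int_D g_{z_0}(z,\zeta,\lambda)\,v(\zeta)\,\mu(\zeta)\,d\Ree\zeta\,d\Imm\zeta=\int_{\partial D}G_{z_0}(z,\zeta,\lambda)(\Phi-\Phi_0)\phi(\zeta)\,|d\zeta|=\phi(z)=\psi(z).
\]
Hence, with $\nu:=\mu-A_\lambda\mu$, we have $\nu|_{\partial D}=0$. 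On the other hand, since $g_{z_0}$ is a Green function of $L_{z_0,\lambda}$ in $D$, one has $L_{z_0,\lambda}(A_\lambda\mu)=v\mu=L_{z_0,\lambda}\mu$, so $L_{z_0,\lambda}\nu=0$ in $D$; and since $L_{z_0,\lambda}(e^{-\lambda(z-z_0)^2}w)=e^{-\lambda(z-z_0)^2}\Delta w$, the function $e^{\lambda(z-z_0)^2}\nu$ is harmonic in $D$ with zero boundary trace, hence vanishes identically, so $\nu\equiv0$. Therefore $\mu=A_\lambda\mu$ on all of $\bar D$, and by the first step $\mu\equiv0$, whence $\psi\equiv0$ and $\phi=\psi|_{\partial D}=0$. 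This proves unique solvability of \eqref{bordpsi} in $C(\partial D,\matn)$.

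The step I expect to be the main obstacle is the ``Green-type computation'' invoked above: one must extract, from the derivation of \eqref{bordpsi}, the fact that the identity relating $e^{\lambda(z-z_0)^2}\int_D g_{z_0}v\mu$ on $\partial D$ to the boundary integral of $(\Phi-\Phi_0)\phi$ holds for \emph{every} solution of $-\Delta\psi+v\psi=0$ in $D$, not only for $\psi_{z_0}$; this is where Green's formula for $L_{z_0,\lambda}$, the comparison with the potential-free problem (which produces $\Phi_0$), and the boundary conditions on $v$ from Theorem \ref{theo2} enter. A secondary, purely technical matter is the regularity bookkeeping: that $\Phi-\Phi_0$ really acts on the relevant matrix-valued boundary functions, that the normal derivatives involved are well defined, that the solutions in play are smooth enough up to $\partial D$ to legitimize the integration by parts, and that the various integral operators are continuous up to $\partial D$ so that the displayed identities hold pointwise on $\bar D$. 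The matrix-valued character of the problem adds nothing essential here: every manipulation is $\matn$-linear and uses no commutativity, the constants merely picking up factors of $n$, as already seen in the definition of $\rho_1$.
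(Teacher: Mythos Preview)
Your approach is essentially the same as the paper's: contraction for \eqref{defmu}, and for \eqref{bordpsi} extend a boundary solution via the Dirichlet problem, use the Alessandrini-type identity (with $u_0(\zeta)=G_{z_0}(z,\zeta,\lambda)I$, harmonic in $\zeta$ for $z\in\partial D$ by \eqref{zetaaa}) to match the boundary integral with the interior one, and then kill the discrepancy by noting it is harmonic with zero trace. The paper phrases the second step as a bijection between solutions of \eqref{defmu} and \eqref{bordpsi}, you phrase it as the Fredholm alternative applied to the homogeneous equation; these are the same argument.

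One technical slip: the bound you write, $\|A_\lambda\|_{C(\bar D)\to C(\bar D)}\le 2n\,c_2(D)\,|\lambda|^{-1/2}\|v\|_{C^1_{\bar z}}$, is not what Lemma \ref{lem1} provides. Estimate \eqref{est1} is a $C^1_{\bar z}\to C^1_{\bar z}$ bound, so the contraction and Neumann series run in $C^1_{\bar z}(\bar D)$, not in $C(\bar D)$. The paper handles this in the right order: any $C(\bar D)$ solution of \eqref{defmu} is automatically in $C^1_{\bar z}(\bar D)$ by the smoothing \eqref{contg}, and then uniqueness in $C^1_{\bar z}$ (from \eqref{est1}, \eqref{estsolmu}) yields uniqueness in $C$. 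Your parenthetical remark about a posteriori $C^1_{\bar z}$ regularity is the correct ingredient, but it should come \emph{before} the contraction, not after. This does not affect the validity of the overall argument.
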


\begin{rem} \label{rem3}
Note that the assumption that $v|_{\partial D} = 0$ is unnecessary for formula \eqref{rech}, equation \eqref{bordpsi} and Propositions \ref{prop2}, \ref{prop3}. In addition, formula \eqref{recv} also holds without this assumption if
\begin{equation} \label{condbord}
\int_{\partial D} e^{\lambda (z-z_0)^2 - \bar \lambda (\bar z - \bar z_0)^2} w(z)|dz| \to 0 \qquad \text{as } |\lambda| \to \infty,
\end{equation}
for fixed $z_0 \in D$ and each $w \in C^1(\partial D)$. The class of domains $D$ for which \eqref{condbord} holds for each $z_0 \in D$ is large and includes, for example, all ellipses.

Note also that if $v|_{\partial D} \neq 0$ but $v \equiv \Lambda \in \matn$ on some open neighborhood of $\partial D$ in $\bar D$, then estimates \eqref{estimv} hold with $h_{z_0} (\lambda)$ replaced by
\begin{equation} \label{hplus}
h^+_{z_0}(\lambda) = h_{z_0}(\lambda) + \int_{\R^2 \setminus D} e^{\lambda(z-z_0)^2 - \bar \lambda (\bar z - \bar z_0)^2} \Lambda \chi(z) d \Ree z \, d \Imm z,
\end{equation}
where $\chi \in C^2(\R^2 , \R)$, $\chi \equiv 1$ on $D$, $\mathrm{supp} \chi$ is compact, and with the constants $a,b$ depending also on $\chi$.
The aforementioned matrix $\Lambda$, for example, can be related with a diagonal matrix composed by the eigenvalues $\{ \lambda_i \}_{1 \leq i \leq n}$ arising in section \ref{sec3d}.
\end{rem}

Theorem \ref{theo2} and Propositions \ref{prop2}, \ref{prop3} yield the following corollary:

\begin{cor} \label{theo1}
Let $D \subset \R^2$ be an open bounded domain with $C^2$ boundary, let $v_1, v_2 \in C^1(\bar D, \matn)$ be two matrix-valued potentials which satisfy \eqref{direig} and $\Phi_1, \Phi_2$ the corresponding Dirichlet-to-Neumann operators. If $\Phi_1 = \Phi_2$ then $v_1 = v_2$.
\end{cor}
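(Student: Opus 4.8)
The plan is to reduce Corollary~\ref{theo1} to the situation of Theorem~\ref{theo2} by passing to a larger domain on which the common boundary data forces a single potential through the reconstruction formulas. First I would record the immediate consequence of Propositions~\ref{prop2} and \ref{prop3}: for $|\lambda|>\rho_1$, equation \eqref{bordpsi} is uniquely solvable, and its kernel $G_{z_0}$ together with $\Phi_0$ depend only on the domain; by Remark~\ref{rem3} neither \eqref{bordpsi} nor \eqref{rech} requires $v|_{\partial D}=0$. Hence $\Phi_1=\Phi_2$ already gives $\psi^{(1)}_{z_0}|_{\partial D}=\psi^{(2)}_{z_0}|_{\partial D}$ and then $h^{(1)}_{z_0}(\lambda)=h^{(2)}_{z_0}(\lambda)$ for all $z_0\in D$ and all large $|\lambda|$. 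On a domain for which \eqref{condbord} holds (e.g. an ellipse) formula \eqref{recv} would then give $v_1(z_0)=v_2(z_0)$ for every $z_0\in D$, hence $v_1=v_2$ on $\bar D$ by continuity; the point is to arrange such a passage for an arbitrary $C^2$ domain $D$ and arbitrary $v_1,v_2$.

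Second, I would establish boundary determination of order one: $\Phi_1=\Phi_2$ implies that $v_1$ and $v_2$ have the same $1$-jet on $\partial D$, i.e.\ $v_1|_{\partial D}=v_2|_{\partial D}$ and $\partial_\nu v_1|_{\partial D}=\partial_\nu v_2|_{\partial D}$. This can be obtained by the standard singular-solution method: from the bilinear identity $\int_{\partial D}g^{T}(\Phi_1-\Phi_2)f\,|dz|=\int_D \phi^{T}(v_1-v_2)\psi\,d\Ree z\,d\Imm z$, where $\psi$ and $\phi$ solve \eqref{equa} with potentials $v_2$ and $v_1^{T}$ and boundary values $f$ and $g$ respectively (using $\Phi_{v_1^{T}}=\Phi_{v_1}^{T}$), one inserts highly oscillating solutions concentrating at a fixed $x_0\in\partial D$ and reads off, successively, $(v_1-v_2)(x_0)$ and $\partial_\nu(v_1-v_2)(x_0)$; equivalently one recovers these matrices from the lower-order terms of the matrix pseudodifferential symbol of $\Phi_1-\Phi_2$. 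This is a routine adaptation of the scalar boundary determination (cf.\ \cite{B} and the references therein), but I expect it to be the main technical ingredient of the corollary.

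Third, using the matching $1$-jets I would extend the potentials. Fix an open disk $\Omega$ with $\bar D\subset\Omega$ and a cut-off $\chi\in C^2(\R^2,\R)$, $\chi\equiv1$ near $\bar D$, $\mathrm{supp}\,\chi\Subset\Omega$. Let $V\in C^1(\R^2,\matn)$ be an extension of $v_1$, and let $W\in C^1(\R^2,\matn)$ be the extension of $v_2-v_1$ by $0$ outside $D$; since $v_2-v_1$ has vanishing $1$-jet on $\partial D$, this zero extension is indeed $C^1$ with $\mathrm{supp}\,W\subset\bar D$. Put $\tilde v_1=\chi V$ and $\tilde v_2=\chi V+W$. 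Then $\tilde v_1,\tilde v_2\in C^1(\bar\Omega,\matn)$, $\tilde v_i=v_i$ on $\bar D$, $\tilde v_1=\tilde v_2$ on $\Omega\setminus D$, and $\tilde v_i\equiv 0$ near $\partial\Omega$; the remaining freedom in $V$ on the collar $\Omega\setminus\bar D$ (and, if needed, a small change of the radius of $\Omega$) also lets us arrange that $0$ is not a Dirichlet eigenvalue of $-\Delta+\tilde v_i$ in $\Omega$, $i=1,2$, so that the associated Dirichlet-to-Neumann maps $\tilde\Phi_1,\tilde\Phi_2$ on $\partial\Omega$ are defined. Since $\tilde v_1=\tilde v_2$ on $\Omega\setminus\bar D$ and $\Phi_1=\Phi_2$ on $\partial D$, a Cauchy-data matching across $\partial D$ — solve on $\Omega$ with $\tilde v_1$ and data $f$ on $\partial\Omega$, keep that solution on the collar, re-solve inside $D$ with $v_2$ and the induced data on $\partial D$, and note the Cauchy data glue because the normal derivative on $\partial D$ from inside equals $\Phi_2(\cdot)=\Phi_1(\cdot)$ — yields $\tilde\Phi_1=\tilde\Phi_2$.

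Finally, $\tilde v_1,\tilde v_2\in C^1(\bar\Omega,\matn)$ satisfy \eqref{direig} in $\Omega$ and vanish on $\partial\Omega$, so Theorem~\ref{theo2} and Propositions~\ref{prop2},~\ref{prop3} apply on $\Omega$: for $|\lambda|$ large, equation \eqref{bordpsi} (with $D$ replaced by $\Omega$) is uniquely solvable and its solution $\tilde\psi^{(i)}_{z_0}|_{\partial\Omega}$ depends only on $\tilde\Phi_i$, $\tilde\Phi_0$ and the explicit kernel $G_{z_0}$; then \eqref{rech} produces $\tilde h^{(i)}_{z_0}(\lambda)$ and \eqref{recv} gives $\tilde v_i(z_0)=\lim_{\lambda\to\infty}\tfrac{2}{\pi}|\lambda|\,\tilde h^{(i)}_{z_0}(\lambda)$. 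Since $\tilde\Phi_1=\tilde\Phi_2$ while $\tilde\Phi_0$ and $G_{z_0}$ do not depend on the potential, we obtain $\tilde v_1(z_0)=\tilde v_2(z_0)$ for all $z_0\in\Omega$, hence $\tilde v_1=\tilde v_2$ on $\bar\Omega$ by continuity, and in particular $v_1=\tilde v_1|_{\bar D}=\tilde v_2|_{\bar D}=v_2$. The genuinely delicate step is the second one; the rest is bookkeeping with the already established reconstruction.
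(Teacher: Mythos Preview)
Your route genuinely differs from the paper's. The paper's own proof of Corollary~\ref{theo1} handles only two easy cases directly---when $v_j|_{\partial D}=0$ (apply Theorem~\ref{theo2} and Propositions~\ref{prop2},~\ref{prop3} verbatim) and when $D$ happens to satisfy \eqref{condbord} (invoke Remark~\ref{rem3})---and for the general case it simply defers to stability estimates announced for a subsequent paper, in the spirit of \cite{NS}. Your first paragraph already reproduces these two cases; what is new in your proposal is the reduction of the general case to the first special case via boundary determination, $C^1$ extension by zero, and the DtN gluing argument on a larger disk $\Omega$. That is a legitimate and more self-contained strategy than the paper's deferral, and the machinery you invoke (transpose identity $\Phi_{v^T}=\Phi_v^{\,T}$ to get a usable bilinear form in the non-commuting matrix setting, Cauchy-data matching to propagate $\Phi_1=\Phi_2$ to $\tilde\Phi_1=\tilde\Phi_2$, and a small perturbation of $V$ or of the radius to avoid a Dirichlet eigenvalue on $\Omega$) is sound. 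Note that the paper explicitly flags in \eqref{aless2} that the naive matrix Alessandrini identity fails without commutativity, so your use of the transposed potential is exactly the right fix; you should state and prove that short identity rather than cite \cite{B} for it. The one step you should not undersell is the first-order boundary determination for $\matn$-valued $C^1$ potentials in two dimensions: it is ``standard'' in spirit but is not in this paper, and since you need the full $1$-jet (so that the zero extension of $v_2-v_1$ is $C^1$), you must actually carry out the oscillatory/singular-solution computation to order one. Once that is done, the rest is, as you say, bookkeeping.
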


Theorem \ref{theo2}, Propositions \ref{prop2}, \ref{prop3} and Corollary \ref{theo1} are proved in section \ref{secpf}.
\smallskip

The global reconstruction of Theorem \ref{theo2} is fine in the sense that is consists in solving Fredholm linear integral equations of the second type and using explicit formulas; nevertheless this reconstruction is not optimal with respect to its stability properties: see \cite{BRS}, \cite{N4}, \cite{BKM} for discussions and numerical implementations of the aforementioned similar (but overdetermined) reconstruction of \cite{N1} for $d=3$ and $n=1$. An approximate but more stable reconstruction method for Problem 1 will be published in another paper.

The present paper is focused on global uniqueness and reconstruction for Problem 1 for $n \geq 2$. In addition, using the techniques developed in the present work and following the  scheme of \cite{NS} it is also possible to obtain a global logarithmic stability estimate for Problem 1 in the multi-channel case. Following inverse problem traditions (e.g. see \cite{A}, \cite{N4}, \cite{NS}) this result will be published in another paper.
\smallskip

{\bf Acknowledgements.} We thank V. A. Burov, O. D. Rumyantseva, S. N. Sergeev for very useful discussions.

\section{Approximation of the 3D equation} \label{sec3d}

In this section we recall how the multi-channel two-dimensional Schr\"odin\-ger equation can be seen as an approximation of the scalar 3D equation in a cylindrical domain; in this framework, three-dimensional inverse problems can be approximated by two-dimensional ones.

Let $L= [a,b]$ for some $a,b \in \R$ and consider the complex-valued potential $v(x,z)$ defined on the set $D \times L$, where $x = (x_1, x_2) \in D \subset \R^2, \; z \in L$. We consider the equation
\begin{equation} \label{3d}
-\Delta \psi(x,z) + v(x,z) \psi(x,z) = 0 \quad \textrm{in } D \times L.
\end{equation}
Now, for every $x \in D$ we can write $\psi(x,z) = \sum_{j=1}^{\infty} \psi_j(x) \phi_j(z)$, where $\{\phi_j\}$ is the orthonormal basis of $L^2 (L)$ given by the eigenfunctions of $- \frac{d^2}{dz^2}$: more precisely
\begin{align} \label{cond1}
-\frac{d^2}{dz^2} \phi_j(z) &= \lambda_j \phi_j(z) \textrm{ for } z \in L, \\ \label{cond2}
\phi_j|_{\partial L} &= 0 \quad \textrm{(for example)} \\ \nonumber
\int_{L}\bar \phi_i(z) \phi_j(z) dz &= \delta_{ij} 
\end{align}
and $\psi_j(x) = \int_L \psi(x,z) \bar \phi_j(z) dz$. Now equation \eqref{3d} reads
\begin{align} \label{cond3}
\sum_{j=1}^{\infty} \left( -\Delta_x \psi_j(x) \phi_j(z) - \psi_j(x) \Delta_z \phi_j(z) \right) + v(x,z)\sum_{j=1}^{\infty} \psi_j(x) \phi_j(z) = 0.
\end{align}
Using \eqref{cond1}-\eqref{cond3} and the properties of $\{ \phi_j(z) \}$, we obtain that equation \eqref{3d} is equivalent to the following infinite-dimensional system
\begin{align}
-\Delta_x \psi_i(x) + \lambda_i \psi_i(x) + \sum_{j=1}^{\infty} V_{ij}(x) \psi_j(x) = 0, \textrm{ for } i = 1,\ldots,
\end{align}
where 
\begin{align*}
V_{ij}(x) = \int_L \bar \phi_i(z) v(x,z) \phi_j(z) dz.
\end{align*}
Notice that if $\bar v =v$ then $V^{\ast} = V$. Now, if we impose $1 \leq i,j \leq n$ for some $n \in \N$, we find equation \eqref{equa}.

We also give here the relation between the Dirichlet-to-Neumann (D-t-N) operators of the 3D equation and that of the 2D multi-channel equation. If $\Phi(\theta,z, \theta',z')$ is the Schwartz kernel of the D-t-N operator of the 3D problem, and $(\Phi_{ij}(\theta,\theta'))_{i,j \geq 1}$ that of the 2D infinity-channel problem, we have
\begin{equation} \label{diri3}
\Phi_{ij}(\theta,\theta') = \int_{L \times L} \Phi(\theta,z, \theta',z')\bar \phi_i(z) \phi_j(z')dz \, dz',
\end{equation}
where $\theta, \theta' \in \partial D$, $z,z' \in L$.
This follows from
\begin{align}
\int_{\partial D \times L} \Phi(\theta,z,\theta',z')f(\theta',z')d\theta' \, dz' = \sum_{i=1}^{\infty} \left( \sum_{j=1}^{\infty} \int_{\partial D} \Phi_{ij}(\theta, \theta')f_j(\theta') d\theta' \right) \phi_i(z),
\end{align}
for every $f \in C^{1}(\partial (D \times L))$ such that $f|_{D \times \partial L} = 0$ and $f(\theta,z) = \sum_{j=1}^{\infty} f_j(\theta) \phi_j(z)$.
\smallskip

Let us remark that reductions of 3D direct and inverse problems to multi-channel 2D problems are well known in the physical literature for a long time (e.g. see \cite{Bur}). Nevertheless, we do not know a reference containing formula \eqref{diri3} in its precise form.

\section{Preliminaries}

In this section we introduce and give details about the above-mentioned family of solutions of equation \eqref{equa}, which will be used throughout all the paper.
\smallskip

Let us define the function spaces $C^1_{\bar z}(\bar D) = \{ u : u, \frac{\partial u}{ \partial \bar z} \in C(\bar D, \matn) \}$ with the norm $\|u \|_{C^1_{\bar z}(\bar D)} = \max ( \|u\|_{C(\bar D)}, \| \frac{\partial u}{\partial \bar z} \|_{C(\bar D)} )$, $\|u\|_{C(\bar D)} = \sup_{z \in \bar D} |u|$ and $|u| = \max_{1 \leq i,j \leq n} |u_{i,j}|$; we define also $C^1_{z}(\bar D) = \{ u : u, \frac{\partial u}{ \partial z} \in C(\bar D, \matn) \}$ with an analogous norm.

The functions $G_{z_0}(z,\zeta, \lambda), \; g_{z_0}(z,\zeta, \lambda), \; \psi_{z_0}(z,\lambda), \; \mu_{z_0}(z,\lambda)$ defined in Section 1, satisfy

\begin{align} \label{eq21}
4 \frac{\partial^2}{\partial z \partial \bar z} &G_{z_0} (z, \zeta, \lambda) = \delta(z-\zeta), \\ \label{zetaaa}
4\frac{\partial^2}{\partial \zeta \partial \bar \zeta} &G_{z_0}(z,\zeta,\lambda) = \delta(\zeta-z),\\ \label{eq22}
4\left(\frac{\partial}{\partial z} + 2\lambda (z-z_0) \right) \frac{\partial}{\partial \bar z} &g_{z_0}(z,\zeta, \lambda) = \delta(z-\zeta), \\
4\frac{\partial}{\partial \bar \zeta} \left(\frac{\partial}{\partial \zeta} - 2\lambda (\zeta-z_0) \right) &g_{z_0}(z,\zeta, \lambda) = \delta(\zeta-z),\\ \label{eq23}
-4 \frac{\partial^2}{\partial z \partial \bar z} &\psi_{z_0} (z,\lambda) + v(z) \psi_{z_0}(z,\lambda) = 0,\\ \label{eq24}
-4 \left(\frac{\partial}{\partial z} + 2\lambda (z-z_0) \right) \frac{\partial}{\partial \bar z} &\mu_{z_0}(z, \lambda) + v(z) \mu_{z_0} (z,\lambda) = 0,
\end{align}
where $z, z_0 , \zeta \in D$, $\lambda \in \C$, $\delta$ is the Dirac's delta. (In addition, it is assumed that \eqref{defmu} is uniquely solvable for $\mu_{z_0}( \cdot, \lambda) \in C^1_{\bar z} (\bar D)$ at fixed $z_0$ and $\lambda$.) Formulas \eqref{eq21}-\eqref{eq24} follow from \eqref{defmu}, \eqref{defg}, \eqref{defG} and from
\begin{align*}
\frac{\partial}{\partial \bar z} &\frac{1}{\pi (z-\zeta)} = \delta(z-\zeta), \\
\left(\frac{\partial}{\partial z} + 2\lambda (z-z_0) \right) &\frac{e^{-\lambda(z-z_0)^2+\bar \lambda (\bar z- \bar z_0)^2}}{\pi (\bar z -\bar \zeta) } e^{\lambda(\zeta - z_0)^2- \bar \lambda(\bar \zeta -\bar z_0)^2}  = \delta(z-\zeta),
\end{align*}
where $z, \zeta, z_0 , \lambda \in \C$.

We say that the functions $G_{z_0}$, $g_{z_0}$, $\psi_{z_0}$, $\mu_{z_0}$, $h_{z_0}$ are the Bukhgeim-type analogues of the Faddeev functions (see \cite{NS}). We recall that the history of these functions goes back to \cite{F} and \cite{BC}.
\smallskip

Now we state some fundamental lemmata.
Let
\begin{equation} \label{green}
g_{z_0, \lambda} u(z) = \int_D g_{z_0} (z, \zeta, \lambda) u(\zeta) d \mathrm{Re}\zeta \, d \mathrm{Im}\zeta, \; z \in \bar D, \; z_0, \lambda \in \C,
\end{equation}
where $g_{z_0}(z, \zeta, \lambda)$ is defined by \eqref{defg} and $u$ is a test function.

\begin{lem}[\cite{NS}] \label{lem1}
Let $g_{z_0, \lambda} u$ be defined by \eqref{green}. Then, for $z_0, \lambda \in \C$, the following estimates hold:
\begin{align} \label{contg}
&g_{z_0, \lambda} u \in C^1_{\bar z}(\bar D), \quad \textrm{for} \; u \in C(\bar D), \\ \label{estcontg}
\| &g_{z_0, \lambda} u \|_{C^1(\bar D)} \leq c_1(D,\lambda) \|u\|_{C(\bar D)}, \quad \textrm{for} \; u \in C(\bar D), \\ \label{est1}
\| &g_{z_0, \lambda} u \|_{C^1_{\bar z}(\bar D)} \leq \frac{c_2(D)}{|\lambda|^{\frac 1 2}} \|u \|_{C^1_{\bar z}(\bar D)}, \quad \textrm{for} \; u \in C^1_{\bar z}(\bar D), \; |\lambda| \geq 1.
\end{align}
\end{lem}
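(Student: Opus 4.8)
The plan is to strip the exponential factors off the kernel \eqref{defg}, which turns Lemma~\ref{lem1} into standard Cauchy-transform regularity together with a single oscillatory integral estimate. Put $E_{z_0,\lambda}(z)=e^{\lambda(z-z_0)^2-\bar\lambda(\bar z-\bar z_0)^2}$; its exponent is purely imaginary, so $|E_{z_0,\lambda}|\equiv 1$. Let $P$ and $\bar Q$ denote the solid Cauchy transforms on $D$ with kernels $\tfrac{1}{\pi(z-\zeta)}$ and $\tfrac{1}{\pi(\bar z-\bar\zeta)}$, so that $\partial_{\bar z}P=\mathrm{id}$, $\partial_z\bar Q=\mathrm{id}$ and $\|Pf\|_{C(\bar D)}\le c(D)\|f\|_{C(\bar D)}$. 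Since $\partial_z+2\lambda(z-z_0)=E_{z_0,\lambda}^{-1}\partial_z E_{z_0,\lambda}$, solving \eqref{eq22} by inverting $\partial_z$ and then $\partial_{\bar z}$ produces the factorisation
\begin{equation} \label{eq:factor}
g_{z_0,\lambda}u=\tfrac14\,P\big(E_{z_0,\lambda}^{-1}\,\bar Q(E_{z_0,\lambda}u)\big),\qquad u\in C(\bar D),
\end{equation}
which can equally be read off \eqref{defg}. The soft bounds \eqref{contg}, \eqref{estcontg} then follow from \eqref{eq:factor} and the classical theory of the Cauchy transform ($P,\bar Q$ map $C(\bar D)$ into $C^{0,\alpha}(\bar D)$ and $\partial_zP$ maps $C^{0,\alpha}(\bar D)$ into itself), together with the fact that multiplication by $E_{z_0,\lambda}^{\pm1}\in C^\infty(\bar D)$ preserves $C(\bar D)$ and $C^{0,\alpha}(\bar D)$; here the $C^{0,\alpha}$-norm of $E_{z_0,\lambda}^{\pm1}$ is $O(|\lambda|)$, which enters only the $\lambda$-dependent constant $c_1(D,\lambda)$ in \eqref{estcontg} and is harmless.

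For the decisive estimate \eqref{est1}, note from \eqref{eq:factor} that $\partial_{\bar z}g_{z_0,\lambda}u=\tfrac14 E_{z_0,\lambda}^{-1}\bar Q(E_{z_0,\lambda}u)$, whence $|\partial_{\bar z}g_{z_0,\lambda}u|=\tfrac14|\bar Q(E_{z_0,\lambda}u)|$ pointwise and $\|g_{z_0,\lambda}u\|_{C(\bar D)}\le\tfrac{c(D)}{4}\|\bar Q(E_{z_0,\lambda}u)\|_{C(\bar D)}$; thus \eqref{est1} reduces to the bound
\begin{equation} \label{eq:osc}
\Big|\int_D\frac{E_{z_0,\lambda}(\zeta)\,u(\zeta)}{\bar\eta-\bar\zeta}\,d\Ree\zeta\,d\Imm\zeta\Big|\le\frac{c(D)}{|\lambda|^{1/2}}\,\|u\|_{C^1_{\bar z}(\bar D)},
\end{equation}
uniformly in $\eta\in\bar D$, $z_0\in\C$ and $|\lambda|\ge1$. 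This is an oscillatory integral whose phase $\Imm\big(\lambda(\zeta-z_0)^2\big)$ has the single non-degenerate critical point $\zeta=z_0$, with Hessian of order $|\lambda|$, so I would prove \eqref{eq:osc} by the usual ``integrate by parts away from the critical point'' device: set $\delta=|\lambda|^{-1/2}$ and split $D$ into $D\cap\{|\zeta-z_0|<\delta\}$, where one uses only $|E_{z_0,\lambda}|\le1$ and $\int_{|\zeta-z_0|<\delta}|\bar\eta-\bar\zeta|^{-1}\,d\Ree\zeta\,d\Imm\zeta\le2\pi\delta$, and its complement in $D$, where one writes $E_{z_0,\lambda}=\big(-2\bar\lambda(\bar\zeta-\bar z_0)\big)^{-1}\partial_{\bar\zeta}E_{z_0,\lambda}$ and integrates by parts in $\bar\zeta$. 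On the complement the gained factor $\big(\bar\lambda(\bar\zeta-\bar z_0)\big)^{-1}$ has size $(|\lambda|\delta)^{-1}=|\lambda|^{-1/2}$, and --- importantly --- the transferred derivative is $\partial_{\bar\zeta}u$, exactly the quantity that $\|u\|_{C^1_{\bar z}}$ controls (a $\zeta$-derivative of $u$ would be unavailable, which is why the $\bar\zeta$-primitive of $E_{z_0,\lambda}$ must be used). The leftover contributions --- the term where $\partial_{\bar\zeta}$ falls on $(\bar\zeta-\bar z_0)^{-1}$, and the boundary integrals over $\partial D$ and over $\{|\zeta-z_0|=\delta\}$ --- are each $O(|\lambda|^{-1/2})$ by the same power counting; the $\partial D$ integral is even $O(|\lambda|^{-1})$ when $z_0\in D$, since $\bar\zeta-\bar z_0$ is then bounded away from $0$ on $\partial D$. (A variant, essentially Bukhgeim's, normalises the phase by the substitution $\tau=\lambda(\zeta-z_0)^2$ away from $z_0$, which exhibits the $|\lambda|^{-1/2}$ gain directly.)

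The main obstacle is the interaction in \eqref{eq:osc} between the singularity $\zeta=\eta$ of the Cauchy kernel and the critical point $\zeta=z_0$: when $\mathrm{dist}(\eta,z_0)\lesssim|\lambda|^{-1/2}$ these coincide, which is precisely why one obtains only $|\lambda|^{-1/2}$ rather than the $|\lambda|^{-1}$ that a smooth amplitude would give, and it forces both the localisation above and a careful $z_0$- and $\eta$-uniform treatment of the circle $\{|\zeta-z_0|=\delta\}$, of the thin region where $\eta$ lies near that circle, and of the resulting Cauchy-type principal-value integrals (one must avoid crude bounds such as $|\bar\zeta-\bar z_0|^{-2}\le\delta^{-2}$, which lose a power of $|\lambda|$). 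Everything else --- the factorisation \eqref{eq:factor}, the soft bounds \eqref{contg}, \eqref{estcontg}, and the absorption of any logarithmic losses (legitimate since $\log|\lambda|\le|\lambda|^{1/2}$ for $|\lambda|\ge1$) --- is routine.
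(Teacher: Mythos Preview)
Your proposal is correct and follows the same route as \cite{NS} (the present paper does not prove Lemma~\ref{lem1} but cites \cite{NS}; the ingredients are however visible in the proof of Lemma~\ref{lem11}): your factorisation \eqref{eq:factor} is precisely the decomposition $g_{z_0,\lambda}=\tfrac14\,T\,\bar T_{z_0,\lambda}$ recalled there, and your oscillatory-integral bound \eqref{eq:osc} via the splitting at radius $|\lambda|^{-1/2}$ and $\partial_{\bar\zeta}$-integration by parts is exactly how \eqref{est34} is obtained in \cite{NS}.
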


Given a potential $v \in C^1_{\bar z}(\bar D)$ we define the operator $g_{z_0,\lambda} v$ simply as $(g_{z_0,\lambda} v) u(z) = g_{z_0,\lambda} w(z), \; w=vu$, for a test function $u$. If $u \in C^1_{\bar z}(\bar D)$, by Lemma \ref{lem1} we have that $g_{z_0,\lambda} v : C^1_{\bar z}(\bar D)  \to C^1_{\bar z}(\bar D)$,
\begin{equation} \label{estsolmu}
\| g_{z_0,\lambda} v \|^{op}_{C^1_{\bar z}(\bar D)} \leq 2n \| g_{z_0,\lambda} \|^{op}_{C^1_{\bar z}(\bar D)} \|v\|_{C^1_{\bar z}(\bar D)},
\end{equation}
where $\| \cdot \|^{op}_{C^1_{\bar z}(\bar D)}$ denotes the operator norm in $C^1_{\bar z}(\bar D)$, $z_0, \lambda \in \C$. In addition, $\| g_{z_0,\lambda} \|^{op}_{C^1_{\bar z}(\bar D)}$ is estimated in Lemma \ref{lem1}. Inequality \eqref{estsolmu} and Lemma \ref{lem1} implies existence and uniqueness of $\mu_{z_0}(z, \lambda)$ (and thus also $\psi_{z_0}(z,\lambda)$) for $|\lambda| > \rho_1(D,N_1,n)$.
\smallskip

Let
\begin{align*}
\mu^{(k)}_{z_0}(z, \lambda) &= \sum_{j=0}^k (g_{z_0,\lambda} v)^j I, \\
h^{(k)}_{z_0}(\lambda) &= \int_D e^{\lambda (z-z_0)^2 -\bar \lambda (\bar z- \bar z_0)^2}v(z) \mu^{(k)}_{z_0}(z,\lambda) d\Ree z \, d \Imm z,
\end{align*} 
where $z,z_0 \in D$, $\lambda \in \C$, $k \in \N \cup \{ 0 \}$.

\begin{lem}[\cite{NS}] \label{lem2}
For $v \in C^1_{\bar z}(\bar D)$ such that $v|_{\partial D} = 0$ the following formula holds:
\begin{equation} \label{estp}
v(z_0) = \frac{2}{\pi} \lim_{\lambda \to \infty} |\lambda| h^{(0)}_{z_0}(\lambda), \qquad z_0 \in D.
\end{equation}
In addition, if $v \in C^2(\bar D)$, $v|_{\partial D}= 0$ and $\frac{\partial v}{\partial \nu}|_{\partial D} = 0$ then
\begin{equation} \label{estm}
\left|v(z_0) - \frac{2}{\pi}|\lambda| h^{(0)}_{z_0}(\lambda) \right| \leq c_3(D,n) \frac{\log(3|\lambda|)}{|\lambda|}\|v\|_{C^2(\bar D)},
\end{equation}
for $z_0 \in D$, $\lambda \in \C$, $|\lambda| \geq 1$.
\end{lem}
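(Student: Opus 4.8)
The plan is to treat $h^{(0)}_{z_0}$ as an oscillatory integral and apply the method of stationary phase. Since $\mu^{(0)}_{z_0}\equiv I$, one has
\[
h^{(0)}_{z_0}(\lambda)=\int_D e^{\phi(z,\lambda)}v(z)\,d\Ree z\,d\Imm z,\qquad \phi(z,\lambda):=\lambda(z-z_0)^2-\bar\lambda(\bar z-\bar z_0)^2 .
\]
Writing $w=z-z_0$ and $\lambda=|\lambda|e^{i\alpha}$, the phase is purely imaginary, $\phi=2i\,\Imm(\lambda w^2)$, and $\Imm(\lambda w^2)$, viewed as a real quadratic form in $(\Ree w,\Imm w)$, has $w=0$ as its only critical point, with real Hessian of determinant $-4|\lambda|^2$ and signature $0$. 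The first computation I would record is the exact Fresnel value: diagonalising $2\,\Imm(\lambda w^2)$ by an orthogonal change of variables to $|\lambda|(p^2-q^2)$ and using $\int_{\R}e^{\pm iat^2}\,dt=(\pi/a)^{1/2}e^{\pm i\pi/4}$ gives $\int_{\R^2}e^{\phi(z,\lambda)}\,d\Ree z\,d\Imm z=\pi/(2|\lambda|)$. This identifies the expected leading term $h^{(0)}_{z_0}(\lambda)\approx \pi v(z_0)/(2|\lambda|)$, i.e. the content of \eqref{estp}.

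To make this rigorous and obtain the remainder, I would first extend $v$ by zero. Because $v$ and $\nabla v$ vanish on $\partial D$ (the latter since $v|_{\partial D}=\partial_\nu v|_{\partial D}=0$), this produces $\tilde v\in C^{1,1}_c(\R^2)$ with $\|\tilde v\|_{C^{1,1}}\le C(D)\|v\|_{C^2(\bar D)}$ and $h^{(0)}_{z_0}(\lambda)=\int_{\R^2}e^{\phi}\tilde v\,d\Ree z\,d\Imm z$, so that all integrations by parts below are performed on $\R^2$ with compactly supported amplitudes, with no boundary contributions, and the localisation scale need not shrink as $z_0\to\partial D$ — this is what makes the constant in \eqref{estm} uniform in $z_0$. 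Next I would localise: fix a radial $\chi\in C^\infty_c(\R^2)$, $\chi\equiv1$ near $0$, and split $h^{(0)}_{z_0}=\int\chi(z-z_0)e^{\phi}\tilde v+\int(1-\chi(z-z_0))e^{\phi}\tilde v$. On the non-stationary piece $\partial_z\phi=2\lambda(z-z_0)$ is bounded away from $0$ on the support of the amplitude, so writing $e^{\phi}=(2\lambda(z-z_0))^{-1}\partial_z e^{\phi}$ and integrating by parts twice via the complex Green formula gives $O(|\lambda|^{-2}\|v\|_{C^2})$ (one integration by parts together with the Riemann–Lebesgue lemma already gives $o(|\lambda|^{-1})$, which is all that is needed for \eqref{estp}). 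On the stationary piece I would Taylor–expand $\tilde v(z_0+w)=v(z_0)+\ell(w)+R(w)$, with $\ell$ the $\R$-linear differential of $v$ at $z_0$: the $v(z_0)$–term equals $v(z_0)\int_{\R^2}\chi(w)e^{\phi}\,d\Ree w\,d\Imm w=v(z_0)\bigl(\pi/(2|\lambda|)+O(|\lambda|^{-2})\bigr)$ after the substitution $w\mapsto|\lambda|^{-1/2}w$ and the Fresnel value above; the $\ell$–term vanishes identically because $\chi(w)e^{\phi(w,\lambda)}\ell(w)$ is odd in $w$; and the $R$–term is estimated by cutting the $w$–integral at radius $|\lambda|^{-1/2}$, using $|R(w)|\lesssim|w|^2\|v\|_{C^2}$ crudely on $|w|\le|\lambda|^{-1/2}$ and integrating by parts twice against $e^{\phi}$ on $|w|\ge|\lambda|^{-1/2}$, where the dominant surviving term is $|\lambda|^{-2}\|v\|_{C^2}\int_{|\lambda|^{-1/2}\le|w|\lesssim1}|w|^{-1}\,d\Ree w\,d\Imm w\sim|\lambda|^{-2}\|v\|_{C^2}\log(3|\lambda|)$. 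Adding the three contributions gives $h^{(0)}_{z_0}(\lambda)=\pi v(z_0)/(2|\lambda|)+O\bigl(\log(3|\lambda|)\,|\lambda|^{-2}\bigr)\|v\|_{C^2}$, which is \eqref{estm}; keeping only the continuity of $\nabla v$ (so that the $R$–term is $o(|w|)$ and the non-stationary piece is $o(|\lambda|^{-1})$) the same scheme yields \eqref{estp}.

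The main obstacle is the bookkeeping in the last step: one must track the exact power of $|\lambda|$ through two integrations by parts on the stationary piece, matching the vanishing order of the amplitude at $w=0$ against the growth of the factors $(2\lambda w)^{-1}$ and $(2\lambda w)^{-2}$, and calibrate the truncation radius $|\lambda|^{-1/2}$ so that precisely one logarithmic factor — and nothing worse — is produced; the contributions coming from the cut-off and from the tail of the Fresnel integral must be checked to be $O(|\lambda|^{-2})$ by the same non-stationary-phase argument. Everything is carried out entrywise in the matrix, which is why $c_3$ is allowed to depend on $n$.
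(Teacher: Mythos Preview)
The paper does not give its own proof of this lemma; it is quoted from \cite{NS}. Your stationary-phase scheme is the right mechanism and, for the quantitative estimate \eqref{estm}, it is essentially the same argument one finds there: the Fresnel value $\int_{\R^2}e^{\phi}\,d\Ree w\,d\Imm w=\pi/(2|\lambda|)$ fixes the leading term, the boundary hypotheses make the zero extension $C^{1,1}_c$, and two integrations by parts away from $w=0$ with truncation at scale $|\lambda|^{-1/2}$ produce exactly one logarithm. The only cosmetic difference is that the arguments visible in this paper (see \eqref{ultima} and the proof of Lemma~\ref{lem11}) work directly on $D$, splitting at $B_{z_0,\varepsilon}$ and using that the boundary terms on $\partial D$ vanish, rather than extending to $\R^2$ and introducing a smooth cut-off; your version is cleaner for bookkeeping, theirs avoids the extension step.

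There is, however, a real slip in your treatment of \eqref{estp}. The hypothesis is $v\in C^1_{\bar z}(\bar D)$, which means only that $v$ and $\partial_{\bar z}v$ are continuous; $\partial_z v$ need not exist, and $v$ need not be differentiable in the real sense at $z_0$. Hence the identity $e^{\phi}=(2\lambda(z-z_0))^{-1}\partial_z e^{\phi}$ cannot be used for integration by parts, and the first-order real Taylor expansion $v(z_0+w)=v(z_0)+\ell(w)+o(|w|)$ with $\ell$ $\R$-linear is not available. The fix is to switch throughout to the conjugate identity $e^{\phi}=-(2\bar\lambda(\bar z-\bar z_0))^{-1}\partial_{\bar z}e^{\phi}$, which is precisely what the paper does in all of its explicit computations; then the non-stationary piece is handled by one $\partial_{\bar z}$-integration by parts plus Riemann--Lebesgue, and on the stationary piece one subtracts $v(z_0)\chi(w)$ and $(\partial_{\bar z}v(z_0))\,\bar w\,\chi(w)$ only (the latter is $O(|\lambda|^{-3/2})$ by scaling and odd in $w$), estimating the remainder via the continuity of $\partial_{\bar z}v$ at $z_0$ rather than a full Taylor formula. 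With this adjustment your outline goes through.
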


Following the proof of \cite[Lemma 6.2]{NS} and assuming \eqref{condbord}, we have that limit \eqref{estp} is valid without the assumption that $v|_{\partial D} =0$.
In addition, if $v|_{\partial D} \neq 0$ but $v \equiv \Lambda \in \matn$ on some open neighborhood of $\partial D$ in $\bar D$, then estimate \eqref{estm} holds with $h^{(0)}_{z_0} (\lambda)$ replaced by
\begin{equation}
h^{(0),+}_{z_0}(\lambda) = h^{(0)}_{z_0}(\lambda) + \int_{\R^2 \setminus D} e^{\lambda(z-z_0)^2 - \bar \lambda (\bar z - \bar z_0)^2} \Lambda \chi(z) d \Ree z \, d \Imm z,
\end{equation}
where $\chi \in C^2(\R^2 , \R)$, $\chi \equiv 1$ on $D$, $\mathrm{supp} \chi$ is compact, and the constant $c_3$ depending also on $\chi$.\smallskip

Let
\begin{equation}
W_{z_0}(\lambda)=\int_D e^{\lambda(z-z_0)^2-\bar\lambda(\bar z-\bar z_0)^2}
w(z)d\Ree\,z d\Imm\,z,
\end{equation}
where $z_0\in\bar D$, $\lambda\in\C$ and $w$ is some $\matn$-valued function on $\bar D$.
(One can see that $W_{z_0}=h_{z_0}^{(0)}$ for $w=v$.)

\begin{lem}[\cite{NS}] \label{lem3}
For $w\in C_{\bar z}^1(\bar D)$ the following estimate holds:
\begin{equation} \label{estw}
|W_{z_0}(\lambda)|\le c_4(D) \frac{\log\,(3|\lambda|)}{ |\lambda|}  \|w\|_{C_{\bar z}^1(\bar D)},\ z_0\in\bar D,\ |\lambda|\ge 1.
\end{equation}
\end{lem}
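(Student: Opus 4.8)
The plan is to gain one power of $|\lambda|$ by a single $\bar\partial$--integration by parts, carried out after removing a disc of radius $\varepsilon\sim|\lambda|^{-1/2}$ centred at $z_0$; the logarithm will then come out of the boundary term and of the residual $|z-z_0|^{-2}$--singularity. Write $E_{z_0}(z)=e^{\lambda(z-z_0)^2-\bar\lambda(\bar z-\bar z_0)^2}$, so that $W_{z_0}(\lambda)=\int_D E_{z_0}(z)\,w(z)\,d\Ree z\,d\Imm z$. The exponent of $E_{z_0}$ equals $2i\,\Imm\!\big(\lambda(z-z_0)^2\big)$, hence is purely imaginary and $|E_{z_0}|\equiv1$: there is no pointwise gain, all the gain must come from oscillation, and the identity to use is
\[
\frac{\partial E_{z_0}}{\partial\bar z}=-2\bar\lambda(\bar z-\bar z_0)\,E_{z_0},\qquad\text{equivalently}\qquad E_{z_0}=\frac{-1}{2\bar\lambda(\bar z-\bar z_0)}\,\frac{\partial E_{z_0}}{\partial\bar z}\quad(z\ne z_0).
\]
Fix $z_0\in\bar D$, $|\lambda|\ge1$, and set $\varepsilon=|\lambda|^{-1/2}$; one may assume $\varepsilon<\mathrm{diam}(D)$, the remaining range $1\le|\lambda|\le\mathrm{diam}(D)^{-2}$ (possible only when $\mathrm{diam}(D)<1$) being absorbed into $c_4(D)$ via the crude bound $|W_{z_0}(\lambda)|\le|D|\,\|w\|_{C(\bar D)}$. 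With $D_\varepsilon=\{z\in D:|z-z_0|<\varepsilon\}$, split $W_{z_0}(\lambda)=\int_{D_\varepsilon}E_{z_0}w\,d\Ree z\,d\Imm z+\int_{D\setminus D_\varepsilon}E_{z_0}w\,d\Ree z\,d\Imm z$, the first integral being at once $\le\pi\varepsilon^{2}\|w\|_{C(\bar D)}=\pi|\lambda|^{-1}\|w\|_{C(\bar D)}$.

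On $D\setminus D_\varepsilon$ we have $|z-z_0|\ge\varepsilon>0$, so I would insert the identity above and integrate by parts by the Cauchy--Pompeiu formula $\int_\Omega\frac{\partial u}{\partial\bar z}\,d\Ree z\,d\Imm z=\frac{1}{2i}\oint_{\partial\Omega}u\,dz$, applied on $\Omega=D\setminus D_\varepsilon$ (for a.e.\ $\varepsilon$ a domain with piecewise--$C^{2}$, hence rectifiable, boundary) to $u=\dfrac{w\,E_{z_0}}{\bar z-\bar z_0}$. This is legitimate because only $\partial u/\partial\bar z$ enters — so $u\in C^1_{\bar z}(\overline{\Omega})$ suffices, which is precisely the available regularity — and because $z_0\notin\overline{\Omega}$, whence $\partial_{\bar z}(\bar z-\bar z_0)^{-1}=-(\bar z-\bar z_0)^{-2}$ there in the ordinary sense. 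One obtains
\[
\int_{D\setminus D_\varepsilon}\!E_{z_0}w\,d\Ree z\,d\Imm z=\frac{-1}{2\bar\lambda}\Bigg[\frac{1}{2i}\oint_{\partial(D\setminus D_\varepsilon)}\!\frac{w\,E_{z_0}}{\bar z-\bar z_0}\,dz-\int_{D\setminus D_\varepsilon}\!\frac{E_{z_0}}{\bar z-\bar z_0}\,\frac{\partial w}{\partial\bar z}\,d\Ree z\,d\Imm z+\int_{D\setminus D_\varepsilon}\!\frac{E_{z_0}\,w}{(\bar z-\bar z_0)^2}\,d\Ree z\,d\Imm z\Bigg].
\]

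It then remains to bound the three terms, using $|E_{z_0}|\equiv1$. The $\partial w/\partial\bar z$--term is $\le\|w\|_{C^1_{\bar z}(\bar D)}\int_D|z-z_0|^{-1}\,d\Ree z\,d\Imm z\le2\pi\,\mathrm{diam}(D)\,\|w\|_{C^1_{\bar z}(\bar D)}$, uniformly in $z_0$. The $(\bar z-\bar z_0)^{-2}$--term is $\le\|w\|_{C(\bar D)}\int_{\varepsilon\le|z-z_0|\le\mathrm{diam}(D)}|z-z_0|^{-2}\,d\Ree z\,d\Imm z=2\pi\,\|w\|_{C(\bar D)}\log\!\big(\mathrm{diam}(D)/\varepsilon\big)=\pi\,\|w\|_{C(\bar D)}\log\!\big(\mathrm{diam}(D)^{2}|\lambda|\big)$. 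Finally $\partial(D\setminus D_\varepsilon)$ splits into an arc of $\{|z-z_0|=\varepsilon\}$ of length $\le2\pi\varepsilon$ on which $|z-z_0|=\varepsilon$, contributing $\le2\pi\,\|w\|_{C(\bar D)}$, and into $\partial D\cap\{|z-z_0|\ge\varepsilon\}$, contributing $\le\|w\|_{C(\bar D)}\oint_{\partial D}\frac{|dz|}{\max(|z-z_0|,\varepsilon)}$. A dyadic decomposition of $\partial D$ in the size of $|z-z_0|$, together with the standard fact that a $C^2$ curve satisfies $\mathrm{length}\big(\partial D\cap\{|z-z_0|\le r\}\big)\le c(D)\,r$ for all $z_0\in\bar D$ and $0<r\le\mathrm{diam}(D)$, bounds this last integral by $c'(D)\big(1+\log(\mathrm{diam}(D)/\varepsilon)\big)$, uniformly in $z_0$. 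Collecting everything and using $\log(\mathrm{diam}(D)^{2}|\lambda|)\le c''(D)\log(3|\lambda|)$ for $|\lambda|\ge1$ gives the asserted estimate.

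The delicate point — and the reason the disc cannot be dispensed with — is the boundary term: integrating by parts directly over $D$ would leave $\frac{-1}{4i\bar\lambda}\oint_{\partial D}\frac{w\,E_{z_0}}{\bar z-\bar z_0}\,dz$, whose kernel $|z-z_0|^{-1}$ fails to be integrable on $\partial D$ as soon as $z_0$ lies on, or very close to, $\partial D$; excising $D_\varepsilon$ first replaces it by the harmless $\max(|z-z_0|,\varepsilon)^{-1}$, and the calibration $\varepsilon=|\lambda|^{-1/2}$ is exactly what keeps $\int_{D_\varepsilon}E_{z_0}w$ of order $|\lambda|^{-1}$ while turning the two residual singular integrals into the loss $\log(3|\lambda|)$. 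I do not expect that logarithm to be removable under the sole hypothesis $w\in C^1_{\bar z}(\bar D)$: every step above uses only $\partial w/\partial\bar z$, never $\partial w/\partial z$, which is also the reason the $\bar\partial$--integration by parts is the right tool — rotating the phase to its normal form $\xi_1\xi_2$ and integrating by parts in one real variable would instead require control of $\partial w/\partial z$.
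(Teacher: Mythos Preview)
Your argument is correct and is essentially the one given in \cite{NS} (to which the paper defers for this lemma): excise the disc $B_{z_0,\varepsilon}$, write $E_{z_0}=-\dfrac{1}{2\bar\lambda(\bar z-\bar z_0)}\partial_{\bar z}E_{z_0}$ on the complement, integrate by parts, and choose $\varepsilon\sim|\lambda|^{-1/2}$; the same decomposition appears verbatim in the paper itself at \eqref{ultima}. One remark: your justification ``$u\in C^1_{\bar z}(\overline{\Omega})$ suffices'' for Stokes is a little quick---the cleanest way to make it rigorous is to observe that both sides of the desired inequality depend continuously on $w$ in the $C^1_{\bar z}(\bar D)$ norm, so by density one may assume $w\in C^1(\bar D)$ (write $w=Tf+\text{holomorphic}$ with $f=\partial_{\bar z}w$ and approximate each piece), after which the integration by parts is classical.
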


\begin{lem} \label{lem4}
For $v \in C^1_{\bar z}(\bar D)$ and for $\|g_{z_0,\lambda} v \|^{op}_{C^1_{\bar z}(\bar D)} \leq \delta < 1$ we have that
\begin{align} \label{estmuk}
&\|\mu_{z_0}(\cdot, \lambda) - \mu_{z_0}^{(k)}(\cdot, \lambda)\|_{C^1_{\bar z}(\bar D)} \leq \frac{\delta^{k+1}}{1-\delta}, \\ \label{esth}
&|h_{z_0}(\lambda)-h^{(k)}_{z_0}(\lambda)| \leq c_5(D,n)\frac{\log(3|\lambda|)}{|\lambda|} \frac{\delta^{k+1}}{1-\delta} \|v\|_{C^1_{\bar z}(\bar D)},
\end{align}
where $z_0 \in D, \; \lambda \in \C,\; |\lambda |\ge 1, \; k \in \N \cup \{ 0 \}$.
\end{lem}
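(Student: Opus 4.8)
The plan is to read off \eqref{estmuk} from the Neumann–series representation of $\mu_{z_0}(\cdot,\lambda)$ that is implicit in the hypothesis. Under the assumption $\|g_{z_0,\lambda}v\|^{op}_{C^1_{\bar z}(\bar D)}\le\delta<1$, equation \eqref{defmu} reads $(I-g_{z_0,\lambda}v)\mu_{z_0}(\cdot,\lambda)=I$ in $C^1_{\bar z}(\bar D)$, and $I-g_{z_0,\lambda}v$ is boundedly invertible on $C^1_{\bar z}(\bar D)$ with $(I-g_{z_0,\lambda}v)^{-1}=\sum_{j\ge 0}(g_{z_0,\lambda}v)^j$; hence $\mu_{z_0}(\cdot,\lambda)=\sum_{j=0}^{\infty}(g_{z_0,\lambda}v)^jI$, the series converging in $C^1_{\bar z}(\bar D)$, and therefore
\[
\mu_{z_0}(\cdot,\lambda)-\mu^{(k)}_{z_0}(\cdot,\lambda)=\sum_{j=k+1}^{\infty}(g_{z_0,\lambda}v)^jI .
\]
Since $\|I\|_{C^1_{\bar z}(\bar D)}=1$ and $\|(g_{z_0,\lambda}v)^jI\|_{C^1_{\bar z}(\bar D)}\le\delta^j$, summing the geometric tail gives $\|\mu_{z_0}(\cdot,\lambda)-\mu^{(k)}_{z_0}(\cdot,\lambda)\|_{C^1_{\bar z}(\bar D)}\le\sum_{j\ge k+1}\delta^j=\delta^{k+1}/(1-\delta)$, which is \eqref{estmuk}.

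For \eqref{esth} I would first note that, by \eqref{defh} and the definition of $h^{(k)}_{z_0}$, the difference $h_{z_0}(\lambda)-h^{(k)}_{z_0}(\lambda)$ is exactly $W_{z_0}(\lambda)$ in the notation preceding Lemma \ref{lem3}, with $w=v\cdot\bigl(\mu_{z_0}(\cdot,\lambda)-\mu^{(k)}_{z_0}(\cdot,\lambda)\bigr)$; this $w$ belongs to $C^1_{\bar z}(\bar D)$ because $v\in C^1_{\bar z}(\bar D)$ and $\mu_{z_0}(\cdot,\lambda)-\mu^{(k)}_{z_0}(\cdot,\lambda)\in C^1_{\bar z}(\bar D)$ by the first part. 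Lemma \ref{lem3} then yields $|h_{z_0}(\lambda)-h^{(k)}_{z_0}(\lambda)|\le c_4(D)\,\frac{\log(3|\lambda|)}{|\lambda|}\,\|w\|_{C^1_{\bar z}(\bar D)}$ for $|\lambda|\ge 1$, so it remains only to estimate $\|w\|_{C^1_{\bar z}(\bar D)}$. Using the elementary submultiplicativity of the $C^1_{\bar z}$-norm under matrix multiplication, $\|AB\|_{C^1_{\bar z}(\bar D)}\le 2n\,\|A\|_{C^1_{\bar z}(\bar D)}\|B\|_{C^1_{\bar z}(\bar D)}$ (the same factor $2n$ as in \eqref{estsolmu}, coming from $|AB|\le n|A||B|$ pointwise and the Leibniz rule for $\partial/\partial\bar z$), together with \eqref{estmuk}, gives $\|w\|_{C^1_{\bar z}(\bar D)}\le 2n\,\|v\|_{C^1_{\bar z}(\bar D)}\,\delta^{k+1}/(1-\delta)$, whence \eqref{esth} with $c_5(D,n)=2n\,c_4(D)$.

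I do not expect a real obstacle: the statement is a short corollary of the convergent Neumann series, of Lemma \ref{lem3}, and of the product estimate on $C^1_{\bar z}(\bar D)$. The only points deserving a word of care are the convergence of all the series in the $C^1_{\bar z}(\bar D)$ topology (immediate from $\delta<1$ and Lemma \ref{lem1}, cf. \eqref{estsolmu}) and the careful bookkeeping of the factor $n$ in the matrix product estimate, which is what makes $c_5$ depend on $n$ as asserted.
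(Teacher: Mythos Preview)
Your argument is correct and is exactly the ``straightforward generalization'' the paper alludes to: the paper does not spell out a proof but refers to \cite{NS} for the scalar case, and the proof there is precisely the Neumann--series tail estimate for \eqref{estmuk} followed by writing $h_{z_0}-h^{(k)}_{z_0}$ as $W_{z_0}$ with $w=v(\mu_{z_0}-\mu^{(k)}_{z_0})$ and applying Lemma~\ref{lem3}. Your bookkeeping of the factor $2n$ in the product estimate on $C^1_{\bar z}(\bar D)$ is the only place where the matrix case differs from the scalar one, and you have handled it correctly.
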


The proof of Lemma \ref{lem4} in the scalar case can be found in \cite{NS}: the generalization to the matrix-valued case is straightforward.

\begin{lem} \label{lem5}
The function $g_{z_0}(z,\zeta,\lambda)$ satisfies the following properties:
\begin{align} 
&g_{z_0}(z,\zeta,\lambda) \quad \textrm{is continuous for } z,\zeta \in \bar D, \;z \neq \zeta, \; z_0 \in D, \\ \label{estgg}
|&g_{z_0}(z,\zeta,\lambda)| \leq c_6(D) |\log |z- \zeta| |, \qquad z, \zeta \in \bar D, \; z_0 \in D,
\end{align}
where $\lambda \in \C$.
\end{lem}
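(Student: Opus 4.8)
The plan is to treat the two assertions of Lemma~\ref{lem5} separately, reducing each to a standard property of the (double) Cauchy transform over the bounded set $D$, after first getting rid of the exponential factors in \eqref{defg}. The key preliminary observation is that both exponentials appearing in \eqref{defg} are of the form $e^{w-\bar w}$ and hence have modulus $1$: indeed $|e^{\lambda(\zeta-z_0)^2-\bar\lambda(\bar\zeta-\bar z_0)^2}|=1$ and $|e^{-\lambda(\eta-z_0)^2+\bar\lambda(\bar\eta-\bar z_0)^2}|=1$ for all values of the arguments. Consequently
\[
|g_{z_0}(z,\zeta,\lambda)|\le\frac{1}{4\pi^2}\int_D\frac{d\Ree\eta\,d\Imm\eta}{|z-\eta|\,|\eta-\zeta|},
\]
a quantity independent of $z_0$ and $\lambda$, and only the geometric integral on the right-hand side needs to be understood.

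To prove \eqref{estgg} I would estimate this geometric integral by the usual decomposition with respect to $r:=|z-\zeta|$. On $D\cap\{|\eta-z|<r/2\}$ one has $|\eta-\zeta|>r/2$, so that contribution is $\le 2\pi$; symmetrically on $D\cap\{|\eta-\zeta|<r/2\}$; and on the remaining region one checks that $|\eta-\zeta|\ge\tfrac14|\eta-z|$, so the integrand there is $\le 4|\eta-z|^{-2}$ and, integrating over $\{r/2\le|\eta-z|\le\mathrm{diam}(D)\}$, that contribution is $\le 8\pi\log(2\,\mathrm{diam}(D)/r)$. Adding the three pieces and using $r\le\mathrm{diam}(D)$ yields a bound of the type $c_6(D)|\log|z-\zeta||$, which is \eqref{estgg}.

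For the continuity assertion, fix $z_*,\zeta_*\in\bar D$ with $z_*\ne\zeta_*$, put $r:=|z_*-\zeta_*|$, and note that, since the prefactor $e^{\lambda(\zeta-z_0)^2-\bar\lambda(\bar\zeta-\bar z_0)^2}$ is continuous in $\zeta$, it suffices to establish continuity at $(z_*,\zeta_*)$ of
\[
F(z,\zeta):=\int_D\frac{e^{-\lambda(\eta-z_0)^2+\bar\lambda(\bar\eta-\bar z_0)^2}}{(z-\eta)(\bar\eta-\bar\zeta)}\,d\Ree\eta\,d\Imm\eta .
\]
For $(z,\zeta)$ close to $(z_*,\zeta_*)$ I would split $D=D_1\cup D_2\cup D_3$ with $D_1:=D\cap\{|\eta-z_*|<r/2\}$, $D_2:=D\cap\{|\eta-\zeta_*|<r/2\}$, and $D_3$ the rest. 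On $D_3$ the integrand stays bounded and converges pointwise, so $\int_{D_3}$ is continuous by dominated convergence. On $D_1$ the factor $(\bar\eta-\bar\zeta)^{-1}$ is bounded and Lipschitz in $\zeta$, so $\int_{D_1}$ equals, up to an error $O(|\zeta-\zeta_*|)$ uniform in $z$, the Cauchy transform over $D_1$ of a fixed bounded density, which is continuous in $z$ by the classical continuity of Cauchy-type integrals of bounded densities over bounded domains; the same applies to $\int_{D_2}$ with the roles of $z$ and $\zeta$ interchanged. Hence $F$, and therefore $g_{z_0}$, is continuous on $\{z\ne\zeta\}$.

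The routine part is \eqref{estgg}, which is essentially pure potential theory once the modulus-$1$ observation has been made. The point requiring a little care is the continuity off the diagonal: the two singularities $\eta=z$ and $\eta=\zeta$ must be separated, which forces the localization above together with an appeal to the standard (but not entirely elementary) fact that the Cauchy transform of a bounded function on a bounded domain is continuous.
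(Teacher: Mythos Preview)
Your argument is correct and is in fact more detailed than what the paper itself provides: the paper disposes of Lemma~\ref{lem5} in one line, saying only that the properties follow from the definition \eqref{defg} and from classical estimates in Vekua's book. Your modulus-$1$ observation reducing everything to the double Cauchy-type integral $\int_D|z-\eta|^{-1}|\eta-\zeta|^{-1}\,d\Ree\eta\,d\Imm\eta$, the three-region decomposition for the logarithmic bound, and the splitting near the two singular points together with the continuity of the Cauchy transform of a bounded density are exactly the ``classical estimates'' the paper is invoking. One cosmetic point: your bound $4\pi+8\pi\log(2\,\mathrm{diam}(D)/r)$ does not literally have the form $c_6(D)|\log r|$ when $r$ is comparable to $1$ (e.g.\ $r=1$), but this is the same harmless abuse of notation as in the paper's statement, since only the behaviour near the diagonal matters for the application to Proposition~\ref{prop2}.
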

These properties follow from the definition \eqref{defg} and from classical estimates (see \cite{V}).
\begin{lem} \label{lem6}
Under the assumptions of Proposition \ref{prop2}, the Schwartz kernel $(\Phi-\Phi_0)(z,\zeta)$ of the operator $\Phi-\Phi_0$ satisfies the following properties:
\begin{align}
&(\Phi-\Phi_0)(z,\zeta) \quad \textrm{is continuous for } z,\zeta \in \partial D, \;z \neq \zeta, \\
|&(\Phi-\Phi_0)(z,\zeta)| \leq c_7(D,v,n) |\log |z- \zeta| |, \qquad z, \zeta \in \partial D.
\end{align}
\end{lem}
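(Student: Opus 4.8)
The plan is to reduce the claimed boundary estimate for the Schwartz kernel of $\Phi-\Phi_0$ to the interior estimate already recorded in Lemma \ref{lem5}, via the representation of $\Phi-\Phi_0$ in terms of Green functions and the potential $v$. First I would recall that, under assumption \eqref{direig}, the operator $\Phi-\Phi_0$ admits an explicit integral representation: if $G_D(z,\zeta)$ denotes the Green function of $-\Delta$ (equivalently $4\partial_z\partial_{\bar z}$) for the Dirichlet problem in $D$, and $P(z,\zeta)$ the associated Poisson kernel, then for $f\in C^1(\partial D,\matn)$ one has $\psi=\psi_0 - \int_D G_D(\cdot,\zeta)v(\zeta)\psi(\zeta)\,d\Ree\zeta\,d\Imm\zeta$, where $\psi_0$ solves the Laplace equation with boundary data $f$. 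Taking the normal derivative on $\partial D$ and subtracting $\Phi_0 f=\partial_\nu\psi_0|_{\partial D}$ yields
\begin{equation*}
(\Phi-\Phi_0)f(z) = -\int_D \frac{\partial}{\partial\nu(z)}G_D(z,\zeta)\,v(\zeta)\psi(\zeta)\,d\Ree\zeta\,d\Imm\zeta,\qquad z\in\partial D .
\end{equation*}
Since $\psi(\zeta)=\int_{\partial D}P(\zeta,w)f(w)\,|dw|$ plus the Green-potential correction, iterating once (or using the bounded solution operator $f\mapsto\psi$, which exists by \eqref{direig}) produces the Schwartz kernel
\begin{equation*}
(\Phi-\Phi_0)(z,\zeta) = -\int_D \frac{\partial}{\partial\nu(z)}G_D(z,\eta)\,v(\eta)\,\Psi(\eta,\zeta)\,d\Ree\eta\,d\Imm\eta ,
\end{equation*}
where $\Psi(\cdot,\zeta)$ is the $H^1$-solution of $-\Delta\Psi+v\Psi=0$ with boundary data $P(\cdot,\zeta)$ (a distribution on $\partial D$), and $\frac{\partial}{\partial\nu(z)}G_D(z,\eta)$ is, up to a smooth factor, the Poisson kernel $P(\eta,z)$.

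The continuity away from the diagonal is then immediate: for $z\neq\zeta$ on $\partial D$, the kernel is an integral over $D$ of a function that is jointly continuous in $(z,\zeta,\eta)$ as long as $z$ and $\zeta$ stay away from each other, because the only singularities of $P(\eta,z)$ and of $\Psi(\eta,\zeta)$ occur as $\eta\to z\in\partial D$ and $\eta\to\zeta\in\partial D$ respectively, and these two "bad" regions are disjoint when $z\ne\zeta$; dominated convergence gives continuity. For the logarithmic bound I would split $D$ into the part near $z$ and the part near $\zeta$. Near $z$, $P(\eta,z)$ behaves like $\mathrm{dist}(\eta,\partial D)/|\eta-z|^2$, which is integrable in $\eta$ with a bound $O(|\log|z-\zeta||)$ once one stays at distance $\gtrsim|z-\zeta|$ from $\zeta$; symmetrically near $\zeta$ one uses the classical estimate (as in \cite{V}) that the solution $\Psi(\eta,\zeta)$ of the perturbed equation with Poisson-kernel boundary data inherits the same $|\log|\eta-\zeta||$-type singularity as the Poisson kernel of $-\Delta$ itself. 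Combining the two pieces, and absorbing $\|v\|_{C(\bar D)}$ and the operator norm of the bounded solution map into the constant, gives $|(\Phi-\Phi_0)(z,\zeta)|\le c_7(D,v,n)\,|\log|z-\zeta||$ with $c_7$ depending on $D$, on $v$ (through $\|v\|_{C(\bar D)}$ and the constant in \eqref{direig}), and on $n$.

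An alternative, perhaps cleaner, route is to work directly from the layer-potential / DtN calculus: $\Phi-\Phi_0$ is a pseudodifferential operator of order $-1$ on $\partial D$ (its full symbol is smoothing relative to the order-$1$ principal part shared by $\Phi$ and $\Phi_0$, which cancels), and order-$(-1)$ classical pseudodifferential operators on a closed curve have Schwartz kernels with exactly a logarithmic singularity on the diagonal and smooth off it; the dependence of the kernel bound on $v$ enters through the order-$(-1)$ symbol, which is built from $v$ and the resolvent of the Dirichlet Laplacian. I expect the main obstacle to be the bookkeeping near the diagonal: one must show the two separate near-singularities of the composed kernel (the Poisson-kernel singularity at $\eta=z$ and the fundamental-solution singularity of $\Psi$ at $\eta=\zeta$) each contribute at worst $O(|\log|z-\zeta||)$ and do not conspire to produce a stronger singularity when $z$ and $\zeta$ approach each other simultaneously; this is handled by the dyadic decomposition of $D$ relative to the scale $|z-\zeta|$ sketched above, together with the classical Schauder-type interior and boundary estimates for $-\Delta+v$ that are exactly the ingredients invoked in Lemma \ref{lem5}.
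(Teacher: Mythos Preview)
The paper does not actually prove this lemma: it simply refers to \cite{N1,N2} for the scalar case and asserts that the matrix-valued generalisation is straightforward. Your Green-function / Poisson-kernel representation of the Schwartz kernel of $\Phi-\Phi_0$ is precisely the mechanism behind those references, so in spirit your proposal matches what the paper is citing rather than differing from it.

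Two points in your sketch should be corrected, though neither is a genuine obstruction. First, Lemma~\ref{lem5} is about the Bukhgeim--Faddeev kernel $g_{z_0}(z,\zeta,\lambda)$ and plays no role here; the classical input you need is the standard behaviour of the Dirichlet Poisson kernel, $P(\eta,z)\sim\mathrm{dist}(\eta,\partial D)/|\eta-z|^{2}$ for $\eta\in D$, $z\in\partial D$. Second, your $\Psi(\eta,\zeta)$ is the Poisson kernel $P_{v}(\eta,\zeta)$ for $-\Delta+v$, and it does \emph{not} have a ``$|\log|\eta-\zeta||$-type'' singularity: it has the same $\mathrm{dist}(\eta,\partial D)/|\eta-\zeta|^{2}$ leading behaviour as the free Poisson kernel (the perturbation by a bounded $v$ is of lower order). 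The logarithm appears only after you integrate the product,
\[
(\Phi-\Phi_0)(z,\zeta)=\int_{D}P(\eta,z)\,v(\eta)\,P_{v}(\eta,\zeta)\,d\Ree\eta\,d\Imm\eta,
\]
and a direct half-plane model computation of $\int_{D}\frac{\mathrm{dist}(\eta,\partial D)^{2}}{|\eta-z|^{2}|\eta-\zeta|^{2}}\,d\eta$ gives exactly the $O(|\log|z-\zeta||)$ bound. With these two fixes your argument goes through; the pseudodifferential alternative you mention is also correct and is essentially a repackaging of the same estimate.
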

For a proof of this Lemma in the scalar case we refer to \cite{N1,N2}: the generalization to the matrix-valued case is straightforward.

\section{Proofs of Theorem \ref{theo2}, Propositions \ref{prop2}, \ref{prop3} \\ and Corollary \ref{theo1}} \label{secpf}
We begin with a matrix version of Alessandrini's identity (see \cite{A} for the scalar case):
\begin{equation} \label{aless1}
\int_{\partial D} u_0(z) (\Phi - \Phi_0)u(z) |dz| = \int_D u_0(z) v(z) u(z) d \Ree z \, d \Imm z
\end{equation}
for any sufficiently regular $\matn$-valued function $u$ (resp. $u_0$) such that $\Delta u_0 = 0$ (resp. $(-\Delta +v) u = 0$) in $D$.
This follows from Stokes's theorem, exactly as in the scalar case.

The general matrix version of Alessandrini's identity (that will not be used)
\begin{equation} \label{aless2}
\int_{\partial D} u_1(z) (\Phi_2 - \Phi_1)u_2(z) |dz| = \int_D u_1(z) (v_2(z) -v_1(z)) u_2(z) d \Ree z \, d \Imm z
\end{equation}
for $u_1,u_2 \in C^2(\bar D, \matn)$ such that $(-\Delta + v_j) u_j =0$ in $D$, works if $u_1$ and $v_1$ commute each other (but does not work in general).

\begin{proof}[Proof of Theorem \ref{theo2}]
Let us begin with the proof of formulas \eqref{recv} and \eqref{estimv}: we have indeed
\begin{align} \label{convg}
\left|v(z_0) - \frac{2}{\pi} |\lambda| h_{z_0}(\lambda)\right| \leq \left|v(z_0) - \frac{2}{\pi}|\lambda| h^{(0)}_{z_0}(\lambda)\right|+\frac{2}{\pi}|\lambda||h_{z_0}(\lambda) - h^{(0)}_{z_0}(\lambda)|.
\end{align}
The first term in the right side goes to zero as $|\lambda| \to \infty$ by Lemma \ref{lem2}, while the other by Lemmata \ref{lem1} and \ref{lem4}. In addition, for $v \in C^2(\bar D,\matn)$ with $\|v\|_{C^2(\bar D)} <N_2$ and $\frac{\partial v}{\partial \nu}|_{\partial D} =0$, using \eqref{est1}, \eqref{estsolmu}, \eqref{estm} and \eqref{esth} we obtain, from \eqref{convg}:
\begin{align*}
\left|v(z_0) - \frac{2}{\pi} |\lambda| h_{z_0}(\lambda)\right| &\leq c_3(D,n)\frac{\log(3|\lambda|)}{|\lambda|}\|v\|_{C^2(\bar D)} \\
&\qquad + c_5(D,n)\frac{\log(3|\lambda|)}{|\lambda|^{1/2}}\|v\|^2_{C^1_{\bar z}(\bar D)} \\
&\leq c_8(D,n)\frac{\log(3|\lambda|)}{|\lambda|^{1/2}}(\|v\|_{C^2(\bar D)}+ \|v\|^2_{C^1_{\bar z}(\bar D)}),
\end{align*}
for $\lambda$ such that
\begin{align*} 
2n\frac{c_2(D)}{ |\lambda|^{\frac 1 2}}\|v\|_{C^1_ {\overline z}(\bar D)} \le \frac 1 2,\ |\lambda|\ge 1,
\end{align*}
which implies \eqref{esttv}. In order to prove \eqref{esttv2} we will need the following lemma: 

\begin{lem} \label{lem11}
Let $g_{z_0, \lambda} u$ be defined by \eqref{green}, where $u \in C^1_{\bar z}(\bar D)$, $z_0, \lambda \in \C$. Then the following estimate holds:
\begin{align}
\| &g_{z_0, \lambda} u \|_{C(\bar D)} \leq \eta(D) \frac{\log(3|\lambda|)}{|\lambda|^{\frac 3 4}} \|u \|_{C^1_{\bar z}(\bar D)}, \; |\lambda| \geq 1.
\end{align}
\end{lem}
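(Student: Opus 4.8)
The plan is to estimate the operator $g_{z_0,\lambda}$ more carefully than in Lemma \ref{lem1}, interpolating between the crude $C^1(\bar D)$-bound \eqref{estcontg} (which has a constant $c_1(D,\lambda)$ with bad $\lambda$-dependence) and the decay in \eqref{est1}. Concretely, recall from \eqref{defg} that $g_{z_0}(z,\zeta,\lambda)$ is obtained by conjugating a $\bar\partial$-inversion against the oscillatory weight $e^{-\lambda(\eta-z_0)^2+\bar\lambda(\bar\eta-\bar z_0)^2}$, so that $g_{z_0,\lambda}u(z)$ can be written as $\bar\partial^{-1}$ applied to a function involving $\int_D \frac{e^{\cdots}}{\bar\eta-\bar\zeta}\,(\cdots)$. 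The key point used in the proof of \eqref{est1} in \cite{NS} is the gain of $|\lambda|^{-1/2}$ coming from stationary phase / the $L^2$-type decay of the oscillatory integral $\int_D e^{-\lambda(\eta-z_0)^2+\bar\lambda(\bar\eta-\bar z_0)^2}a(\eta)\,d\Ree\eta\,d\Imm\eta$ when $a\in C^1$. I would isolate that oscillatory integral and show it obeys a bound of the form $C(D)\,|\lambda|^{-3/4}\log(3|\lambda|)\,\|a\|_{C^1_{\bar z}}$ when one additionally keeps track of the $\bar\partial^{-1}$ smoothing in $z$, rather than the $|\lambda|^{-1/2}$ that suffices for the $C^1_{\bar z}$-norm.

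First I would reduce to a model integral: after the change of variables centering at $z_0$ and splitting $\bar\partial^{-1}$ via its kernel $\frac{1}{\pi(z-\eta)}$, the quantity $g_{z_0,\lambda}u(z)$ becomes, up to bounded factors,
\[
\frac{1}{4\pi^2}\int_D\frac{1}{z-\eta}\left(\int_D \frac{e^{-\lambda(\eta-z_0)^2+\bar\lambda(\bar\eta-\bar z_0)^2}}{\bar\eta-\bar\zeta}\,e^{\lambda(\zeta-z_0)^2-\bar\lambda(\bar\zeta-\bar z_0)^2}u(\zeta)\,d\Ree\zeta\,d\Imm\zeta\right)d\Ree\eta\,d\Imm\eta.
\]
The inner $\zeta$-integral is a Cauchy-type transform of the oscillating function; integrating by parts in $\zeta$ against $\frac{1}{\bar\eta-\bar\zeta}$ (writing $e^{\lambda(\zeta-z_0)^2-\bar\lambda(\bar\zeta-\bar z_0)^2}$ as $\frac{-1}{2\bar\lambda(\bar\zeta-\bar z_0)}\partial_{\bar\zeta}e^{\lambda(\zeta-z_0)^2-\bar\lambda(\bar\zeta-\bar z_0)^2}$ away from $\bar\zeta=\bar z_0$) trades the oscillation for a factor $|\lambda|^{-1}$ at the cost of a $|\log|\lambda||$ from the logarithmic singularity near $z_0$ and near $\zeta=\eta$ — this is exactly the mechanism already present in Lemmata \ref{lem2}, \ref{lem3}. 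Then I would use the $z$-integration against the $L^1_{\mathrm{loc}}$ kernel $\frac{1}{z-\eta}$, which is a bounded map on $C(\bar D)$ (and in fact gains a small Hölder modulus), to absorb the remaining $\eta$-integral. Carefully balancing the region $|\eta-z_0|\lesssim|\lambda|^{-1/2}$ (where one uses the trivial bound) against $|\eta-z_0|\gtrsim|\lambda|^{-1/2}$ (where the integration by parts is legitimate) is what produces the exponent $3/4$ rather than $1$.

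The main obstacle is precisely this balancing of the near-$z_0$ region. The oscillatory weight $e^{-\lambda(\eta-z_0)^2+\bar\lambda(\bar\eta-\bar z_0)^2}$ has modulus one, so there is no pointwise decay; the decay is entirely from cancellation, and the integration by parts that extracts it introduces a factor $\frac{1}{\bar\eta-\bar z_0}$ that is not locally integrable to the power needed. One must therefore cut out a disc of radius $r\sim|\lambda|^{-\alpha}$ around $z_0$, bound the contribution of the disc by $\mathrm{area}\times\|u\|_{C}\sim r^2\|u\|_C$ after the $z$-integration, and bound the exterior by $|\lambda|^{-1}r^{-1}\log(1/r)\|u\|_{C^1_{\bar z}}$ or so; optimizing $r^2\sim |\lambda|^{-1}r^{-1}$ gives $r\sim|\lambda|^{-1/3}$ and an exponent $2/3$, so to reach $3/4$ one actually needs to exploit the extra half-power already available from \eqref{est1} together with an interpolation, i.e. geometric-mean the $C^1(\bar D)$-type control against the $|\lambda|^{-1/2}$ $C(\bar D)$-gain in a way analogous to Lemma \ref{lem3}. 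I would set this up as: split $u = u(z_0)I + (u-u(z_0)I)$, handle the constant part by the already-proven Lemma \ref{lem3}-type estimate on $W_{z_0}$ (which gives $|\lambda|^{-1}\log(3|\lambda|)$), and handle the oscillation of the variable part using \eqref{est1} restricted to a shrinking neighborhood; choosing the neighborhood of radius $\sim|\lambda|^{-1/4}$ makes the two contributions $\sim|\lambda|^{-3/4}\log(3|\lambda|)\|u\|_{C^1_{\bar z}}$ match, yielding the claim.

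Once Lemma \ref{lem11} is in hand, \eqref{esttv2} follows from \eqref{convg} by taking $k=1$ instead of $k=0$ in Lemma \ref{lem4}: the remainder $|h_{z_0}-h_{z_0}^{(1)}|$ is controlled via \eqref{esth} with $\delta\sim|\lambda|^{-1/2}$ giving a $|\lambda|^{-1}$-type factor times $\log(3|\lambda|)$, while the difference $|v(z_0)-\frac{2}{\pi}|\lambda|h_{z_0}^{(1)}(\lambda)|$ is split as $|v(z_0)-\frac{2}{\pi}|\lambda|h^{(0)}_{z_0}(\lambda)|$ (bounded by \eqref{estm}, of order $|\lambda|^{-1}\log(3|\lambda|)N_2$) plus $\frac{2}{\pi}|\lambda|\,|h^{(0)}_{z_0}-h^{(1)}_{z_0}|$, and this last term equals $\frac{2}{\pi}|\lambda|$ times the $W$-type integral of $v\cdot g_{z_0,\lambda}v\,I$; applying Lemma \ref{lem3} in the outer variable and Lemma \ref{lem11} to $g_{z_0,\lambda}(v)I$ (so that $v\cdot g_{z_0,\lambda}v\,I\in C^1_{\bar z}$ with norm $\lesssim \|v\|_{C^1_{\bar z}}\cdot|\lambda|^{-3/4}\log(3|\lambda|)\|v\|_{C^1_{\bar z}}$, using that multiplication by $v\in C^1_{\bar z}$ is bounded and that the $C^1_{\bar z}$-norm of $g_{z_0,\lambda}vI$ is itself $\lesssim|\lambda|^{-1/2}\|v\|$, whichever is needed) produces the stated $\frac{(\log(3|\lambda|))^2}{|\lambda|^{3/4}}N_2(N_2^2+1)$ after collecting the $\|v\|_{C^1_{\bar z}}\le\|v\|_{C^2}<N_2$ powers. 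The constant $b(D,n)$ absorbs $c_3,c_4,c_5,\eta$ and the numerical factors, and the restriction $|\lambda|>\rho_2(D,N_1,n)$ is exactly what guarantees $\delta=2n c_2(D)|\lambda|^{-1/2}\|v\|_{C^1_{\bar z}}\le\tfrac12$ so that the Neumann-series remainder estimates of Lemma \ref{lem4} apply.
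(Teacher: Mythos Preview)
Your overall plan---write $g_{z_0,\lambda}$ as $T$ composed with the oscillatory transform $\bar T_{z_0,\lambda}$, integrate by parts in $\bar T_{z_0,\lambda}$ to extract $|\lambda|^{-1}$ away from $z_0$, and balance against a small disc around $z_0$---is exactly the paper's approach. But the specific balancing you propose has a gap.

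When you estimate the disc contribution you write ``bound the contribution of the disc by $\mathrm{area}\times\|u\|_C\sim r^2\|u\|_C$''; this throws away precisely the decay you then go looking for. On the disc $B_{z_0,r}$ you should not use the trivial bound on $\bar T_{z_0,\lambda}u$ but the uniform estimate $|\bar T_{z_0,\lambda}u(\zeta)|\le \eta_2|\lambda|^{-1/2}\|u\|_{C^1_{\bar z}}$ (this is what underlies \eqref{est1} in \cite{NS}). Combined with $\int_{B_{z_0,r}}|\zeta-z|^{-1}\,d\Ree\zeta\,d\Imm\zeta\le 2\pi r$, the disc gives $C\,r\,|\lambda|^{-1/2}\|u\|_{C^1_{\bar z}}$. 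On $D\setminus B_{z_0,r}$ the integration by parts gives the pointwise bound $|\bar T_{z_0,\lambda}u(\zeta)|\le C\log(3|\lambda|)\,|\lambda|^{-1}|\zeta-z_0|^{-2}\|u\|_{C^1_{\bar z}}$, and then $\int_{D\setminus B_{z_0,r}}|\zeta-z|^{-1}|\zeta-z_0|^{-2}\,d\Ree\zeta\,d\Imm\zeta\le C(D)/r$, so the exterior gives $C\log(3|\lambda|)\,|\lambda|^{-1}r^{-1}\|u\|_{C^1_{\bar z}}$. Balancing $r|\lambda|^{-1/2}\sim |\lambda|^{-1}r^{-1}$ yields $r=|\lambda|^{-1/4}$ and the exponent $3/4$ directly---no further interpolation is needed.

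Your attempted fix via the splitting $u=u(z_0)I+(u-u(z_0)I)$ does not work as written: $g_{z_0,\lambda}(u(z_0)I)=u(z_0)\,g_{z_0,\lambda}I$ is \emph{not} the single oscillatory integral $W_{z_0}$ of Lemma~\ref{lem3} (it still carries both Cauchy kernels of \eqref{defg}), so Lemma~\ref{lem3} does not give a $|\lambda|^{-1}\log$ bound for it; and ``\eqref{est1} restricted to a shrinking neighborhood'' is not a well-defined estimate, since \eqref{est1} is a global operator bound. The missing idea is simply to use the $|\lambda|^{-1/2}$ bound \emph{on the disc itself}, which you already flagged as ``the extra half-power available from \eqref{est1}''.
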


\noindent
{\it Proof of Lemma \ref{lem11}.} As in the proof of \cite[Lemma 3.1]{NS}, we can write $g_{z_0, \lambda} = \frac 1 4 T \bar T_{z_0, \lambda}$, for $z_0, \lambda \in \C$, where
\begin{align*}
&T u(z) = - \frac{1}{\pi} \int_D \frac{u(\zeta)}{\zeta - z} d \Ree\zeta \, d \Imm \zeta, \\
&\bar T_{z_0, \lambda} u(z) = -\frac{e^{-\lambda(z- z_0)^2 + \bar \lambda(\bar z - \bar z_0)^2}}{\pi} \int_D \frac{e^{\lambda(\zeta-z_0)^2-\bar \lambda(\bar \zeta - \bar z_0)^2}}{\bar \zeta- \bar z} u(\zeta)  d \Ree\zeta \, d \Imm \zeta,
\end{align*}
for $z \in \bar D$ and $u$ a test function. We have that (see \cite{NS}):
\begin{align}
&Tw \in C^1_{\bar z}(\bar D), \\ \label{est32}
&\| Tw \|_{C^1_{\bar z}(\bar D)} \leq \eta_1(D) \| w \|_{C(\bar D)}, \; \textrm{where } w \in C(D), \\ \label{est33}
&\bar T_{z_0,\lambda} u \in C(\bar D), \\ \label{est34}
&\|\bar T_{z_0, \lambda} u\|_{C(\bar D)} \leq \frac{\eta_2(D)}{|\lambda|^{\frac 1 2}} \|u \|_{C^1_{\bar z}(\bar D)}, \; |\lambda| \geq 1, \\ \label{est35}
&\|\bar T_{z_0, \lambda} u\|_{C(\bar D)} \leq \frac{\log (3 |\lambda|)(1+|z-z_0|)\eta_3(D)}{|\lambda| |z- z_0|^2}\|u\|_{C^1_{\bar z}(\bar D)}, \; |\lambda| \geq 1,
\end{align}
where $u \in C^1_{\bar z}(\bar D)$, $z_0, \lambda \in \C$.

Let $z_0 \in D$, $0 < \delta < \frac 1 2$ and $B_{z_0,\delta} = \{ z \in \C : |z- z_0| < \delta \}$. We have
\begin{align} \label{estg}
|4\pi g_{z_0, \lambda} u(z)| &= \left| \int_D \frac{\bar T_{z_0, \lambda} u(\zeta)}{\zeta - z} d \Ree\zeta \, d \Imm \zeta \right|  \\ \nonumber
&\leq \int_{B_{z_0,\delta} \cap D} \frac{|\bar T_{z_0, \lambda} u(\zeta)|}{|\zeta - z|} d \Ree\zeta \, d \Imm \zeta + \int_{D \setminus B_{z_0,\delta}} \frac{|\bar T_{z_0, \lambda} u(\zeta)|}{|\zeta - z|} d \Ree\zeta \, d \Imm \zeta \\ \nonumber
&\leq 2 \pi \delta \frac{\eta_2(D)}{|\lambda|^{\frac 1 2}} \|u \|_{C^1_{\bar z}(\bar D)} + \frac{\log (3 |\lambda|)\eta_4(D)}{|\lambda| \delta}\|u\|_{C^1_{\bar z}(\bar D)},
\end{align}
where we used the following estimate:
\begin{align*}
&\int_{D \setminus B_{z_0,\delta}} \frac{1}{|\zeta - z||\zeta - z_0|^2} d \Ree\zeta \, d \Imm \zeta \\
&\qquad = \int_{B_{z,\delta} \cap (D \setminus B_{z_0,\delta})} \frac{1}{|\zeta - z||\zeta - z_0|^2} d \Ree\zeta \, d \Imm \zeta \\ \nonumber
&\qquad  \qquad + \int_{D \setminus ( B_{z,\delta} \cup B_{z_0,\delta})} \frac{1}{|\zeta - z||\zeta - z_0|^2} d \Ree\zeta \, d \Imm \zeta \\
&\qquad \leq \frac{2 \pi}{\delta} + \int_{D \setminus ( B_{z,\delta} \cup B_{z_0,\delta})} \frac{1}{|\zeta - z|^3}+ \frac{1}{|\zeta - z_0|^3} d \Ree\zeta \, d \Imm \zeta \\
&\qquad \leq \frac{\eta_5(D)}{\delta}.
\end{align*}
Putting $\delta =\frac{1}{2} |\lambda|^{-\frac{1}{4}}$ in \eqref{estg} we obtain the result. Thus Lemma \ref{lem11} is proved.\smallskip

We now come back to the proof of \eqref{esttv2}. Proceeding from \eqref{convg} and Lemma \ref{lem2} we obtain: 
\begin{align} \label{convg2}
\left|v(z_0) - \frac{2}{\pi} |\lambda| h_{z_0}(\lambda)\right| \leq c_3(D,n)\frac{\log(3|\lambda|)}{|\lambda|}\|v\|_{C^2(\bar D)}+\frac{2}{\pi}|\lambda||h_{z_0}(\lambda) - h^{(0)}_{z_0}(\lambda)|,
\end{align}
for $|\lambda| \geq 1$. In addition, from the definitions of $h^{(k)}\!, \; \mu^{(k)} \!$, Lemmata \ref{lem1} and \ref{lem4}, we have
\begin{align*}
&|h_{z_0}(\lambda) - h^{(0)}_{z_0}(\lambda)| \\
&\quad \leq \left|\int_D e^{\lambda (z-z_0)^2 -\bar \lambda (\bar z- \bar z_0)^2}v(z) g_{z_0,\lambda}v(z) d\Ree z \, d \Imm z \right| + O\left(\frac{\log(3|\lambda|)}{|\lambda|^2} \right)n^2 \|v\|^3_{C^1_{\overline z}(\bar D)},
\end{align*}
for $\lambda$ such that $2n\frac{c_2(D)}{ |\lambda|^{1/2}}\|v\|_{C^1_ {\overline z}(\bar D)} \le \frac 1 2,\ |\lambda|\ge 1$.

Repeating the proof of \cite[Lemma 3.3]{NS} and using also Lemma \ref{lem11}, we have, for $0<\varepsilon \leq 1$,
\begin{align} \label{ultima}
&\left|\int_D e^{\lambda (z-z_0)^2 -\bar \lambda (\bar z- \bar z_0)^2}v(z) g_{z_0,\lambda}v(z) d\Ree z \, d \Imm z \right| \\ \nonumber
&\leq \int_{D \cap B_{z_0,\varepsilon}}\! \! \! \! \! \! \! \!  \|v(z) g_{z_0,\lambda}v(z)\|_{C(\bar D)} d \Ree z \, d \Imm z +\frac{1}{4 |\lambda|} \int_{\partial (D \setminus B_{z_0,\varepsilon})} \! \! \! \! \! \! \! \! \! \! \! \! \! \! \! \! \! \frac{\|v(z) g_{z_0,\lambda}v(z)\|_{C(\bar D)}}{|\bar z - \bar z_0|}|dz| \\ \nonumber
&\qquad + \frac{1}{2|\lambda|}\int_{D \setminus B_{z_0,\varepsilon}}\left| \frac{\partial}{\partial \bar z}\left( \frac{v(z) g_{z_0,\lambda}v(z)}{\bar z-\bar z_0}\right) \right| d \Ree z \, d \Imm z 
 \\ \nonumber
&\leq  \sigma_1(D,n) \|v\|_{C(\bar D)} \|v\|_{C^1_{\overline z}(\bar D)} \frac{\varepsilon^2  \log(3 |\lambda|)}{|\lambda|^{3/4}} \\ \nonumber
&\qquad+ \sigma_2(D,n) \|v\|_{C(\bar D)} \|v\|_{C^1_{\overline z}(\bar D)} \frac{\log(3 \varepsilon^{-1}) \log(3 |\lambda|) }{|\lambda|^{1+3/4}} \\ \nonumber
&\qquad + \frac{1}{8|\lambda|} \left|\int_{D \setminus B_{z_0,\varepsilon}} e^{\lambda (z-z_0)^2 -\bar \lambda (\bar z- \bar z_0)^2} v(z) \frac{\bar T_{z_0, \lambda}v(z)}{\bar z- \bar z_0} d\Ree z \, d \Imm z \right|, \quad |\lambda| \geq 1,
\end{align}
where we also used integration by parts and the fact that $\frac{\partial}{\partial \bar z}g_{\lambda,z_0}u(z)= \frac{1}{4}\bar T_{z_0,\lambda}u(z)$. The last term in \eqref{ultima} can be estimated independently on $\varepsilon$ by
\begin{align}
\sigma_3(D,n) \frac{\log(3|\lambda|)}{|\lambda|^{1+3/4}} \|v\|_{C(\bar D)} \|v\|_{C^1_{\overline z}(\bar D)} 
\end{align}
using the same argument as in the proof of Lemma \ref{lem11} (see estimate \eqref{estg}). Now putting $\varepsilon = |\lambda|^{-1/2}$ in \eqref{ultima} we obtain
\begin{align*}
|\lambda||h_{z_0}(\lambda) - h^{(0)}_{z_0}(\lambda)| \leq \sigma_4(D,n)\frac{(\log(3|\lambda|))^2}{|\lambda|^{3/4}}\|v\|^2_{C^1_{\overline z}(\bar D)}( \|v\|_{C^1_{\overline z}(\bar D)} +1),
\end{align*}
for $|\lambda| > \rho_2(D,N_1,n),$ which, together with \eqref{convg2}, gives us \eqref{esttv2}. \smallskip

The proofs of the other formulas of Theorem \ref{theo2} are based on identity \eqref{aless1}. As $\mu_{z_0}(z,\lambda) = e^{-\lambda (z-z_0)^2} \psi_{z_0}(z,\lambda)$, we can write the generalized scattering amplitude as
\begin{equation} \nonumber
h_{z_0}(\lambda) = \int_D e^{-\bar \lambda (\bar z-\bar z_0)^2} v(z) \psi_{z_0}(z,\lambda) d \Ree z \, d \Imm z.
\end{equation}
Now identity \eqref{aless1} with $u_0(z) = e^{-\bar \lambda (\bar z-\bar z_0)^2}I$ and $u(z) = \psi_{z_0}(z,\lambda)$ reads
\begin{equation} \nonumber
\int_{\partial D} e^{-\bar \lambda (\bar z-\bar z_0)^2}(\Phi - \Phi_0)\psi_{z_0}(z,\lambda) |dz| = \int_D e^{-\bar \lambda (\bar z-\bar z_0)^2} v(z) \psi_{z_0}(z,\lambda) d \Ree z \, d \Imm z
\end{equation}
which gives formula \eqref{rech}.

Since $\mu_{z_0}$ is a solution of equation \eqref{defmu}, $\psi_{z_0}(z,\lambda)$ satisfies the equation
\begin{equation} \label{defpsi}
\psi_{z_0}(z,\lambda) = e^{\lambda(z-z_0)^2}I + \int_D G_{z_0}(z,\zeta,\lambda)v(\zeta) \psi_{z_0}(\zeta,\lambda) d \Ree \zeta \, d \Imm \zeta,
\end{equation}
for $z_0, z \in \bar D$, $\lambda \in \C$, $|\lambda| > \rho_1(D,N_1,n)$.
Thus again by identity \eqref{aless1}, with $u_0 = G_{z_0}(z,\zeta,\lambda)I$ and $u(z) = \psi_{z_0}(\zeta , \lambda)$, by \eqref{zetaaa} and \eqref{defpsi} we obtain, for $z \in \partial D$,
\begin{align*}
\int_{\partial D}\! \! \! G_{z_0}(z,\zeta, \lambda) (\Phi-\Phi_0) \psi_{z_0}(\zeta,\lambda) |d \zeta| &= \int_D G_{z_0}(z,\zeta,\lambda) v(\zeta) \psi_{z_0}(\zeta,\lambda) d \Ree \zeta \, d\Imm \zeta \\ \nonumber
&= \psi_{z_0}(z,\lambda) - e^{\lambda(z-z_0)^2}I.
\end{align*}
This finish the proof of Theorem \ref{theo2}.
\end{proof}

\begin{proof}[Proof of Proposition \ref{prop2}]
By \eqref{defG} we have that $G_{z_0}(z,\zeta, \lambda)$ satisfies the same properties as $g_{z_0}(z,\zeta, \lambda)$ in Lemma \ref{lem5}, with the difference that the constant in \eqref{estgg} depends also on $\lambda$. This observation, along with Lemma \ref{lem6}, implies that the operator $A(\lambda)$ defined as
\begin{equation} \nonumber
A(\lambda)u(z) = \int_{\partial D} G_{z_0} (z, \zeta, \lambda) (\Phi-\Phi_0)u(\zeta) |d \zeta|, \qquad z \in \partial D,
\end{equation}
for a test function $u$, is compact on the space of continuous functions on $\partial D$. Thus equation \eqref{bordpsi} is a Fredholm linear integral equation of the second kind in the space of continuous functions on $\partial D$. 
\end{proof}

\begin{proof}[Proof of Proposition \ref{prop3}]
First we have that equations \eqref{defmu} and \eqref{bordpsi} are well defined (i.e. Fredholm linear integral equations of the second type) on the spaces of continuous functions on $\bar D$ and $\partial D$ respectively. This follows from \eqref{estcontg} for the first equation and from Proposition \ref{prop2} for the second one. 
\smallskip

Now if \eqref{defmu} admits a solution $\mu_{z_0}(z,\lambda) \in C(\bar D)$, then by \eqref{contg} and \eqref{defmu} one readily obtains $\mu_{z_0}(z,\lambda) \in C^1_{\bar z} (\bar D)$. This solution is unique by Lemma \ref{lem1} for $|\lambda| > \rho_1(D,N_1,n)$ and by the same arguments as in the proof of Theorem \ref{theo2} one has that $\psi_{z_0}(z,\lambda)|_{z \in \partial D}$ satisfies equation \eqref{bordpsi}.
\smallskip

Conversely, suppose that $\psi_{z_0}(z,\lambda) \in C(\partial D)$ satisfies equation \eqref{bordpsi}: we have to show that $\psi_{z_0}(z,\lambda)$, defined on $\bar D$ as the solution of the Dirichlet problem $(-\Delta +v)\psi_{z_0}(z,\lambda) = 0$ with boundary values given by a solution of equation \eqref{bordpsi}, satisfies \eqref{defpsi}.

By identity \eqref{aless1}, $\psi_{z_0}(z,\lambda)$ satisfies already equation \eqref{defpsi} with $z \in \partial D$. Now, the function
\begin{equation}
\varphi(z) = \psi_{z_0}(z,\lambda)-e^{\lambda(z-z_0)^2}I -\int_D G_{z_0}(z,\zeta,\lambda)v(\zeta) \psi_{z_0}(\zeta,\lambda) d \Ree \zeta \, d \Imm \zeta
\end{equation}
satisfies $\Delta \varphi =0$ in $D$ and $\varphi|_{\partial D} = 0$, so $\varphi \equiv 0$ in $D$.
Proposition \ref{prop3} is proved.
\end{proof}

\begin{proof}[Proof of Corollary \ref{theo1}]
If $v_j|_{\partial D} = 0$, for $j=1,2$, then we can apply Theorem \ref{theo2} and Propositions \ref{prop2}, \ref{prop3}. As $\Phi_1 = \Phi_2$, then $\psi^1_{z_0}(\cdot,\lambda)|_{\partial D} = \psi^2_{z_0}(\cdot,\lambda)|_{\partial D}$ for $|\lambda| > \rho_1(D,N_1,n)$ (where we called $\psi^j_{z_0}(z,\lambda)$ the Bukhgeim analogues of the Faddeev solutions corresponding to $v_j$, for $j=1,2$). Thus we also have equality between the corresponding generalized scattering amplitudes, $h^1_{z_0}(\lambda) = h^2_{z_0}(\lambda)$ for $|\lambda| > \rho_1(D,N_1,n)$, which yields $v_1(z_0) = v_2(z_0)$ for $z_0 \in D$.

If $v_j|_{\partial D} \neq 0$, for $j=1,2$, and $D$ is such that \eqref{condbord} holds, then by Remark \ref{rem3} we can apply Theorem \ref{theo2} and argue as above.

The general case follows from stability estimates which will be published in another paper, following the scheme of \cite{NS}.
\end{proof}
\newpage

\end{document}